\theoremstyle{plain}
\newtheorem{proposition}{Proposition}[section]
\newtheorem{corollary}[proposition]{Corollary}
\newtheorem*{proposition*}{Proposition}
\newtheorem{lemma}[proposition]{Lemma}
\newtheorem*{theorem*}{Theorem}
\newtheorem{theorem}{Theorem}
\newtheorem*{definition}{Definition}
\newenvironment{customthm}[1]
  {\innercustomthm}
  {\endinnercustomthm}
\def\A{\mathcal{A}}
\def\dim{\text{dim\,}}
\def\C{\mathcal{C}}
\def\dim{\text{\rm dim\,}}
\def\Exc{\text{Exc\,}}
\def\L{\mathscr{L}}
\def\P{\mathbb{L}}
\def\M{\overline{M}}
\def\N{\mathbb{N}}
\def\SM{\overline{\mathcal{M}}}
\def\O{\mathcal{O}}
\def\P{\mathbb{P}}
\def\Q{\mathbb{Q}}
\def\X{\mathcal{X}}
\def\sigman{\{\sigma_{i}\}_{i=1}^{n}}
\def\sigmapn{\{\sigma'_{i}\}_{i=1}^{n}}
\def\S{\mathbb{S}}
\def\Proj{\text{\rm Proj\,}}
\def\Pic{\text{\rm Pic\,}}
\newcommand{\pder}[3][]{\frac{\partial^{#1}#2}{\partial#3^{#1}}}
\newcommand{\dop}[2][]{\frac{\partial^{#1}}{\partial#2^{#1}}}
\begin{document}

\title[Intersection numbers on $\M_{1,n}(m)$]{Intersections of $\psi$-classes on $\M_{1,n}(m)$}
\author{David Ishii Smyth}

\maketitle
\begin{abstract}
We explain how to compute top-dimensional intersections of $\psi$-classes on $\M_{1,n}(m)$, the moduli space of $m$-stable curves. On the spaces $\M_{1,n}$,  these intersection numbers are determined by two recursions, namely the string equation and dilaton equation. We establish, for each fixed $m \geq 1$, an analogous pair of recursions that determine these intersection numbers on the spaces $\M_{1,n}(m)$.
\end{abstract}

\tableofcontents

\section{Introduction}

In \cite{Witten}, Witten made a remarkable conjecture concerning intersections of $\psi$-classes on $\M_{g,n}$. To recall the statement, let $\pi: \C \rightarrow \SM_{g,n}$ denote the universal curve over the moduli stack of stable curves, let $\{\sigma_i\}_{i=1}^{n}$ denote the universal sections of $\pi$, and let $L_i:=\sigma_i^{*}\omega_{\pi}.$ Then $L_i$ descends to a $\Q$-line bundle on the coarse moduli space $\M_{g,n}$, and we define
$$
\psi_i:=c_1(L_i) \in A^{1}_{\Q}(\M_{g,n}).
$$
For any collection of nonnegative integers $d_1, \ldots, d_n$ satisfying $\sum_{i=1}^{n}d_i= \dim \M_{g,n}=3g-3+n$, we let
$$
\langle \psi_1^{d_1}\psi_2^{d_2}\ldots\psi_n^{d_n} \rangle_{g,n} \in \Q
$$
denote the degree of the class $\psi_1^{d_1}\psi_2^{d_2}\ldots\psi_n^{d_n} \in A^{3g-3+n}_{\Q}(\M_{g,n})$. We will call these rational numbers \emph{Witten-Kontsevich numbers}. Witten assembled these numbers into a generating function, conjectured that this function should solve a certain system of partial differential equations (the so-called KdV hierarchy), and showed that the resulting recursions would determine the numbers uniquely. The conjecture was proven by Kontsevich \cite{Kontsevich}, as well as Okounkov-Pandharipande \cite{okounkov-pandharipande} and Mirzakhani \cite{mirzakhani}, and was a major impetus for the development of Gromov-Witten theory.

Evidently, the definition of these intersection numbers depends not only on $M_{g,n},$ but on the specific choice of compactification $\M_{g,n}$. Through recent work on the Hassett-Keel program, we now know that there are many geometrically meaningful compactifications of $M_{g,n}$. Specifically, for any rational number $\alpha \in \Q \cap [0,1]$, we expect the log-canonical model
$$
\M_{g,n}(\alpha):=\Proj R(\SM_{g,n}, K_{\SM_{g,n}}+\alpha\delta+(1-\alpha)\psi)
$$
to have a modular interpretation as a moduli space of Gorenstein pointed curves \cite{afs}, so one should be able to define $\psi_i \in A^{1}_{\Q}(\M_{g,n}(\alpha))$ exactly as above, and define the intersection number $$
\langle \psi_1^{d_1}\psi_2^{d_2}\ldots\psi_n^{d_n} \rangle^{\alpha}_{g,n} \in \Q
$$
as the degree of  $\psi_1^{d_1}\psi_2^{d_2}\ldots\psi_n^{d_n}$. It is natural to ask whether the corresponding generating function is again a solution for the KdV hierarchy, or some other integrable system.

In this paper, we take a step toward answering this question by explaining how to compute these invariants in genus one (for all $\alpha$). Since the Hassett-Keel program (with the scaling defined above) is trivial in genus zero (i.e. $\M_{0,n}(\alpha)=\M_{0,n}$ for all $\alpha$), this is the first case in which genuinely new invariants arise.\footnote{
We should remark that if one adopts the scaling $\M_{g,n}(\alpha):=\Proj R(\SM_{g,n}, K_{\SM_{g,n}}+\alpha\delta)$, then the log canonical models $\M_{0,n}(\alpha)$ are each isomorphic to a moduli space of weighted pointed stable curves $\M_{0,\A}$ \cite{fedorchuk-smyth_lcmodels, alexeev-swinarski}, and top intersections of $\psi$-classes on these spaces are computed in \cite{guy-alexeev}. However, this choice scaling is not compatible with the natural boundary stratification of $\M_{g},$ and therefore not the appropriate generalization of the Hassett-Keel program to $\M_{g,n}$.} To begin, let us recall how the Witten-Kontsevich numbers are computed for $\M_{1,n}$. This does not require the full strength of the Witten-Kontsevich theorem, but only the following proposition, proved in Witten's original paper.

\begin{proposition*}[\cite{Witten}] The Witten-Kontsevich numbers satisfy the following two recursions.
\begin{enumerate}[         (a)]
\item (String Equation)
Suppose $d_1, \ldots, d_n$ satisfy $\sum_{i=1}^{n} d_i=3g-3+n+1.$ Then
\begin{align*}
\langle \prod_{i=1}^{n} \psi_i^{d_i} \cdot \psi_{n+1}^{0} \rangle_{g,n+1}&=\sum_{j=1}^{n}\langle\prod_{i=1}^{n}\psi_i^{d_i-\delta_{ij}} \rangle_{g,n}\\
\end{align*}

\item (Dilaton Equation)
Suppose that $d_1, \ldots, d_n$ satisfy $\sum_{i=1}^{n} d_i=3g-3+n.$ Then
\begin{align*}
\langle \prod_{i=1}^{n} \psi_i^{d_i} \cdot \psi_{n+1} \rangle_{g,n+1}&=(2g-2+n) \langle\prod_{i=1}^{n}\psi_i^{d_i} \rangle_{g,n}
\end{align*}
\end{enumerate}
\end{proposition*}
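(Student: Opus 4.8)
\medskip
\noindent\textbf{Proof proposal.} The plan is to run everything through the forgetful morphism $\pi\colon\SM_{g,n+1}\to\SM_{g,n}$ that drops the $(n+1)$-st marked point and contracts any destabilized component. Under Knudsen's identification of $\SM_{g,n+1}$ with the universal curve over $\SM_{g,n}$, the map $\pi$ becomes the structure morphism, the remaining universal sections $\sigma_1,\dots,\sigma_n$ acquire pairwise disjoint images $D_i:=\sigma_i(\SM_{g,n})$, each carried isomorphically onto $\SM_{g,n}$ by $\pi$, and one has the comparison formula $\psi_i=\pi^{*}\psi_i+D_i$ for $1\le i\le n$ together with $\psi_{n+1}=c_1\bigl(\omega_\pi(D_1+\cdots+D_n)\bigr)$. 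The first step is to record the intersection-theoretic consequences needed downstream: $D_iD_j=0$ for $i\ne j$; $\psi_i|_{D_i}=0$ and $\psi_{n+1}|_{D_i}=0$, because the relevant marked point sits on a rigid three-pointed rational bubble; and $D_i|_{D_i}=-\psi_i$, since the conormal bundle of a section is its cotangent line, equivalently $D_i^{2}=-\pi^{*}\psi_i\cdot D_i$.

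For the string equation (a), I would substitute $\psi_i=\pi^{*}\psi_i+D_i$ into $\int_{\SM_{g,n+1}}\prod_{i=1}^{n}\psi_i^{d_i}$. From $D_i^{2}=-\pi^{*}\psi_i\cdot D_i$ one gets $D_i^{k}=(-\pi^{*}\psi_i)^{k-1}D_i$, whence $(\pi^{*}\psi_i+D_i)^{d_i}=(\pi^{*}\psi_i)^{d_i}+(\pi^{*}\psi_i)^{d_i-1}D_i$ for $d_i\ge 1$; expanding the product over $i$ and deleting the vanishing cross terms $D_iD_j$ leaves $\prod_i(\pi^{*}\psi_i)^{d_i}+\sum_{j}(\pi^{*}\psi_j)^{d_j-1}D_j\prod_{i\ne j}(\pi^{*}\psi_i)^{d_i}$. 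The first summand is $\pi^{*}$ of a class of codimension $\sum d_i=\dim \SM_{g,n}+1$ on $\SM_{g,n}$, hence zero; each remaining summand, via the projection formula and the isomorphism $\pi|_{D_j}\colon D_j\xrightarrow{\ \sim\ }\SM_{g,n}$, equals $\langle\prod_{i}\psi_i^{d_i-\delta_{ij}}\rangle_{g,n}$ (interpreted as $0$ when $d_j=0$). Summing over $j$ yields (a).

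For the dilaton equation (b), multiply $\prod_{i=1}^{n}(\pi^{*}\psi_i+D_i)^{d_i}$ by $\psi_{n+1}$. Since $\psi_{n+1}|_{D_i}=0$, every term containing a factor $D_i$ dies, so $\langle\prod_i\psi_i^{d_i}\cdot\psi_{n+1}\rangle_{g,n+1}=\int_{\SM_{g,n+1}}\pi^{*}\bigl(\prod_i\psi_i^{d_i}\bigr)\cdot\psi_{n+1}=\int_{\SM_{g,n}}\prod_i\psi_i^{d_i}\cdot\pi_{*}\psi_{n+1}$ by the projection formula, and $\pi_{*}\psi_{n+1}=(2g-2+n)\,[\SM_{g,n}]$ because $\psi_{n+1}$ restricts on a fibre $C$ to $\omega_C(p_1+\cdots+p_n)$, of degree $2g-2+n$; this produces the stated factor. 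The one genuinely nontrivial input is the package of geometric facts about $\pi$ in the first paragraph — above all the comparison formula $\psi_i=\pi^{*}\psi_i+D_i$ and the identity $\psi_{n+1}=c_1(\omega_\pi(\sum_i D_i))$ — and it is precisely this input that must be reworked on $\M_{1,n}(m)$, where the forgetful map and the configuration of its sections and boundary divisors behave differently.
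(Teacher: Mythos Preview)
Your proof is correct and follows the same standard approach the paper recalls in Sections~\ref{S:string} and~\ref{S:dilaton}: expand $\psi_i=\pi^{*}\psi_i+D_i$ using disjointness of the $D_i$ and the self-intersection $D_i|_{D_i}=-\psi_i$, then use the projection formula together with $\pi_{*}\psi_{n+1}=(2g-2+n)[\SM_{g,n}]$. Your identity $(\pi^{*}\psi_i+D_i)^{d_i}=(\pi^{*}\psi_i)^{d_i}+(\pi^{*}\psi_i)^{d_i-1}D_i$ is a slightly cleaner packaging of the same binomial-sum step the paper writes as $\sum_{j=1}^{d_i}(-1)^{j-1}\binom{d_i}{j}=1$, but the arguments are otherwise identical.
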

These recursions allow one to compute a Witten-Kontsevich number on $\M_{g,n+1}$ as a sum of Witten-Kontsevich numbers on $\M_{g,n}$, provided that at least one $\psi_i$ appears with multiplicity zero or one. Since any top-dimensional intersection product $\psi_1^{d_1}\ldots\psi_n^{d_n}$ on $\M_{1,n}$ satisfies $\sum_{i=1}^{n} d_i=\dim \M_{1,n}=n,$ we necessarily have $d_i=0$ or $1$ for at least one $i$. Thus, all genus one invariants can be computed inductively starting from the single, well-known initial condition $\langle \psi_1 \rangle_{1,1}=1/24$. 

Now we explain how this picture generalizes to the log-canonical models $\M_{1,n}(\alpha).$ In \cite{smyth_elliptic1}, we constructed for each pair of integers $n > m \geq 1$, a Deligne-Mumford stack $\SM_{1,n}(m)$ parametrizing $n$-pointed elliptic curves with nodes and elliptic $k$-fold points $(k=1,2,\ldots, m)$ as allowable singularities. In \cite{smyth_elliptic2}, we showed that these stacks have projective coarse moduli spaces, and that each log canonical model $\M_{1,n}(\alpha)$ is isomorphic to one of the coarse moduli spaces $\M_{1,n}(m)$ (we adopt the convention that $\M_{1,n}(0):=\M_{1,n}$).  Thus, describing all genus one invariants $\langle \psi_1^{d_1}\psi_2^{d_2}\ldots\psi_n^{d_n} \rangle_{1,n}^{\alpha}$  for a specified $\alpha$ is equivalent to computing $\langle \psi_1^{d_1}\psi_2^{d_2}\ldots\psi_n^{d_n} \rangle^{m}$ for a specified $m$, where 
$$
\langle \psi_1^{d_1}\psi_2^{d_2}\ldots\psi_n^{d_n} \rangle^{m} \in \Q
$$
denotes the degree of  $\psi_1^{d_1}\psi_2^{d_2}\ldots\psi_n^{d_n}$ in $A_{\Q}^{*}(\M_{1,n}(m))$. We call these intersection numbers \emph{$m$-stable Witten-Kontsevich numbers}. The results of this paper determine, for each $m,$ a pair of recursions and an initial condition which determine all $m$-stable Witten-Kontsevich numbers. These new recursions differ from the original string/dilaton equations by a sum of ``error" terms, which are naturally indexed by $m$-partitions of $[n]$, i.e. partitions of $[n]:=\{1, \ldots, n\}$ into $m$ disjoint, nonempty subsets.  In order to give a precise statement, we introduce some additional notation. 

We let ${n \brack m}$ denote the set of all $m$-partitions of $[n]$. If $S=\{S_1, \ldots, S_m\}$ is an $m$-partition of $[n]$, we define $k(S)$ to be the number of $S_i$ such that $|S_i| \geq 2$, and we alway assume that the $S_i$ are labelled so that $S_1, \ldots, S_{k(S)}$ satisfy $|S_i| \geq 2$, and $S_{k(S)+1}, \ldots, S_{m}$ are singletons. We call $k(S)$ the \emph{index of $S$}. Finally, if $n, a_1, \ldots, a_l$ are nonnegative integers, we set
\begin{align*}
{n \choose a_1 \ldots a_l}:=
\frac{n!}{a_1!a_2!\ldots a_l! (n-\sum_{i=1}^{l} a_i)!}
\end{align*}
provided $\sum_{i=1}^{l} a_i \leq n,$ and zero otherwise. We adopt the usual convention that $0!=1$. We can now state our main results.

\begin{theorem} Fix nonnegative integers $n, m$ satisfying $n>m$. The $m$-stable Witten-Kontsevich numbers satisfy the following two recursions.
\begin{enumerate}[         (a)]
\item ($m$-stable String Equation) Suppose $d_1, \ldots, d_n$ satisfy $\sum_{i=1}^{n} d_i=n+1.$ Then
\begin{align*}
\langle \prod_{i=1}^{n} \psi_i^{d_i} \cdot \psi_{n+1}^{0} \rangle^m&=\sum_{j=1}^{n}\langle \prod_{i=1}^{n}\psi_i^{d_i-\delta_{ij}} \rangle^m + \frac{m! }{24} \sum_{S \in {n \brack m}}(-1)^{\star(S)}\prod_{j=1}^{k(S)}{|S_j|-1 \choose \{d_i\}_{i \in S_j}}
\end{align*}
\item ($m$-stable Dilaton Equation)
Suppose $d_1, \ldots, d_n$ satisfy $\sum_{i=1}^{n} d_i=n.$ Then
\begin{align*}
\langle \prod_{i=1}^{n} \psi_i^{d_i} \cdot \psi_{n+1} \rangle^m &=n \langle\prod_{i=1}^{n}\psi_i^{d_i} \rangle^m +\frac{m! }{24} \sum_{S \in {n \brack m}}(-1)^{\star(S)} \prod_{j=1}^{k(S)}{|S_j|-1 \choose \{d_i\}_{i \in S_j}}
\end{align*}
where $\star(S):={n-m-k(S)-\sum_{j \in S_1 \cup \ldots \cup S_{k(S)}} d_j}-1.$
\end{enumerate}
\end{theorem}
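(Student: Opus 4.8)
The natural strategy is to pull back the intersection product along the boundary-comparison morphism between $\M_{1,n}$ and $\M_{1,n}(m)$, or more precisely to work directly on $\SM_{1,n}(m)$ and its universal curve. The first step is to set up the contraction morphism $c: \SM_{1,n+1} \dashrightarrow \SM_{1,n+1}(m)$ (and its companion $\SM_{1,n}(m)$-versions) and understand exactly how $\psi$-classes transform under it. On $\M_{g,n}$, the string and dilaton equations come from the identity $\psi_i = c^*\psi_i + [D_i]$, where $D_i$ is the boundary divisor on which the $(n+1)$-st point collides with the $i$-th, together with the projection formula for the forgetful map $\pi: \SM_{1,n+1} \to \SM_{1,n}$. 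On $\SM_{1,n}(m)$ the analogous forgetful map still exists, but now the locus where $\psi_{n+1}$-configurations become $m$-unstable gets contracted, and the correction is supported on the locus parametrizing curves with an elliptic $k$-fold point whose $k$ branches carry the marked points distributed according to an $m$-partition $S$. So I would first write $\psi_i$ on $\SM_{1,n+1}(m)$ as the pullback of $\psi_i$ from $\SM_{1,n+1}$ plus classes supported on the exceptional loci of $c$, then push everything down.

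Concretely, the key computation is the following: the map $\pi: \SM_{1,n+1}(m) \to \SM_{1,n}(m)$ is the universal curve, but the universal curve over $\SM_{1,n}(m)$ is not $\SM_{1,n+1}(m)$ in the naive way — there is a contraction, and the ``extra'' contribution is a sum over strata $Z_S$ indexed by $m$-partitions $S$, each of which is (up to a finite map) a product $\prod_j \M_{0, |S_j|+1} \times (\text{a point or small genus-one factor})$ carrying the elliptic $m$-fold point. The factor $m!/24$ should emerge as the product of the genus-one contribution $1/24$ (essentially $\langle\psi_1\rangle_{1,1}$, reflecting the elliptic tail that gets crushed) with a combinatorial factor $m!$ counting labelings of the branches at the elliptic point. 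The binomial coefficients $\binom{|S_j|-1}{\{d_i\}_{i\in S_j}}$ are exactly the top $\psi$-intersection numbers $\langle \prod_{i\in S_j}\psi_i^{d_i}\rangle_{0,|S_j|+1}$ on the genus-zero components $\M_{0,|S_j|+1}$ (which are multinomial coefficients), and the sign $(-1)^{\star(S)}$ will come from excess intersection / self-intersection formulas for the normal bundle of $Z_S$ — each self-intersection of the exceptional class contributes a sign, and the exponent $\star(S)$ counts how many such self-intersections appear, which is governed by $n-m-k(S)-\sum_{j} d_j$ (the number of ``free'' points that must be absorbed).

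The plan of steps is therefore: (1) recall from \cite{smyth_elliptic1, smyth_elliptic2} the structure of $\SM_{1,n}(m)$, the forgetful morphism, and the comparison with $\SM_{1,n}$, identifying precisely the locus contracted and its normalization as a product of $\M_{0,\bullet}$'s glued along an elliptic $m$-fold point; (2) express $\psi_{n+1}$ (for dilaton) or the relation $\psi_i^{d_i}$ vs. pullback (for string) in terms of a pullback class plus a sum of boundary/exceptional classes $[Z_S]$; (3) apply the projection formula for $\pi$, reducing the main term to the ordinary string/dilaton equation (which already holds verbatim on $\M_{1,n}(m)$ for the pullback classes, since $\pi$ is still the universal curve away from the contracted locus) and reducing each error term to an intersection number on $Z_S$; (4) evaluate the intersection number on $Z_S$ using the splitting $Z_S \sim \prod_j \M_{0,|S_j|+1}$ together with the known value $\langle\psi_1\rangle_{1,1}=1/24$ and the multinomial formula for genus-zero $\psi$-intersections; (5) bookkeep the combinatorial factors ($m!$ from branch labelings, the multinomials, and the sign). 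The main obstacle I anticipate is step (2)–(3): getting the exceptional/excess-intersection bookkeeping exactly right — in particular pinning down the precise normal bundle of the contracted stratum inside $\SM_{1,n+1}(m)$ and its self-intersections, since that is what produces the delicate sign $(-1)^{\star(S)}$ and ensures no overcounting when several $S_j$ have size $\geq 2$. A secondary subtlety is that $\psi_i$ on $\SM_{1,n}(m)$ differs from the pullback of $\psi_i$ from $\SM_{1,n}$ by correction terms as well, so one must be careful that the ``main term'' really does satisfy the classical recursions and isolate all corrections into the $Z_S$ sum.
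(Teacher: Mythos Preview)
Your overall strategy---resolve the forgetful map, compare $\psi$-classes on the resolution, reduce the main term to the classical string/dilaton argument, and compute the error terms on the exceptional loci---is exactly the paper's approach. However, three concrete points in your outline are either wrong or understate the difficulty in a way that matters.

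\textbf{(i) The forgetful map is only rational.} You write that ``the analogous forgetful map still exists'' on $\SM_{1,n}(m)$. It does not: $\SM_{1,n+1}(m)\dashrightarrow\SM_{1,n}(m)$ is undefined along the strata $\Delta_S$ (curves whose elliptic spine has exactly $m+1$ distinguished points, one of them $p_{n+1}$). The paper resolves this by blowing up $\cup_S\Delta_S$ to get $p:\X\to\SM_{1,n+1}(m)$, $q:\X\to\SM_{1,n}(m)$, and the comparison formula is $q^*\psi_i+\Delta_{\{i,n+1\}}=p^*\psi_i+\sum_{\{i\}\in S}E_S$ on $\X$. The classical LHS manipulation then goes through verbatim; the new content is entirely in the $E_S$ terms.

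\textbf{(ii) The exceptional locus is a projective bundle, not a product.} You describe $Z_S$ as ``(up to a finite map) a product $\prod_j\M_{0,|S_j|+1}\times(\text{genus-one factor})$''. That product is $\Delta_S\simeq\SM_{1,m+1}(m)\times\prod_j\SM_{0,|S_j|+1}$, the \emph{center} of the blowup. The exceptional divisor is $E_S=\P(N_{\Delta_S})$, a $\P^{k-1}$-bundle over it, and the error class lives in $A^*(E_S)=A^*(\Delta_S)[\eta]/(\eta^k+s_1\eta^{k-1}+\cdots+s_k)$ with $\eta=c_1(\O(-1))$. Evaluating $\deg Z_S$ is the genuinely hard step: one must reduce a polynomial $((x_0+\eta)^d-x_0^d)/\eta$ modulo the tautological relation and extract the $\eta^{k-1}$-coefficient. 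This is where the sign $(-1)^{\star(S)}$ and the multinomials $\binom{|S_j|-1}{\{d_i\}}$ actually come from, via several symmetric-function identities. Your heuristic ``each self-intersection contributes a sign'' is too vague to produce $\star(S)$.

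\textbf{(iii) The constant $m!/24$ is not $\tfrac{1}{24}\cdot m!$ from branch labelings.} There are no labelings to count: the partition $S$ already fixes which points sit on which branch. What the computation on $E_S$ actually produces is the factor $\langle\psi_1^{m+1}\rangle^m$, the top $\psi$-intersection on $\SM_{1,m+1}(m)$. The paper first proves the recursions with this undetermined constant (Theorems $1^*,2^*$), and only afterward proves $\langle\psi_1^{m+1}\rangle^m=m!/24$ by a separate induction using \emph{both} $1^*$ and $2^*$. Treating $m!/24$ as an obvious combinatorial input would make the argument circular; recognizing the need to decouple this is a real structural point you are missing.
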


The original string/dilaton recursions are proved by analyzing the behavior of $\psi$-classes under the forgetful morphism $\SM_{g,n+1} \rightarrow \SM_{g,n}$. Our modified recursions are proved similarly by analyzing the behavior of $\psi$-classes under the \emph{rational} forgetful map $\SM_{1,n+1}(m) \dashrightarrow \SM_{1,n}(m)$. The error terms correspond to certain intersection numbers supported on the exceptional divisors of a resolution of this rational map.

By analyzing the rational reduction map $\SM_{1,n}(m) \dashrightarrow \SM_{1,n}(m+1),$ one can also prove a recursion relating $(m+1)$-stable and $m$-stable Witten-Kontsevich numbers. (There is no analogue of this recursion in Witten's original paper.)

\begin{theorem}[Reduction Recursion] Fix nonnegative integers $n, m$ satisfying $n>m+1$. Suppose $d_1, \ldots, d_n$ satisfy $\sum d_i=n.$ Then
\begin{align*}
\langle \prod_{i=1}^{n} \psi_i^{d_i} \rangle^{m+1} &=\langle\prod_{i=1}^{n}\psi_i^{d_i} \rangle^{m} +\frac{m! }{24} \sum_{S \in {n \brack m+1}}(-1)^{\star(S)} \prod_{j=1}^{k(S)}{|S_j|-1 \choose \{d_i\}_{i \in S_j}}
\end{align*}
where $\star(S):={n-m-k(S)-\sum_{j \in S_1 \cup \ldots \cup S_{k(S)}} d_j}-1.$
\end{theorem}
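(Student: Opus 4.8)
The plan is to prove the Reduction Recursion by the same strategy used for the $m$-stable string and dilaton equations, now applied to the rational \emph{reduction} map $r\colon\SM_{1,n}(m)\dashrightarrow\SM_{1,n}(m+1)$ instead of the rational forgetful map. The first step is to produce an explicit resolution: a smooth proper Deligne-Mumford stack $\widetilde{\SM}$ together with birational morphisms $\beta\colon\widetilde{\SM}\to\SM_{1,n}(m)$ and $\gamma\colon\widetilde{\SM}\to\SM_{1,n}(m+1)$ with $\gamma=r\circ\beta$, obtained by blowing up $\SM_{1,n}(m)$ along (the closures of) the loci where $r$ is undefined. The key structural input is the combinatorics of those loci: the indeterminacy locus of $r$ parametrizes $m$-stable curves $C$ containing a subcurve $Z$ of arithmetic genus one with no marked points and exactly $m+1$ external nodes, and $r$ contracts $Z$ to an elliptic $(m+1)$-fold point; the $m+1$ connected components of $\overline{C\setminus Z}$ then distribute $\{1,\dots,n\}$ into $m+1$ nonempty groups. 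Consequently the exceptional divisors $E_S$ of $\beta$ are indexed by the $(m+1)$-partitions $S=\{S_1,\dots,S_{m+1}\}\in{n\brack m+1}$; the parts $S_j$ with $|S_j|\ge 2$ are exactly the branches of the elliptic $(m+1)$-fold point carrying a nontrivial genus-zero tail, so $k(S)$ counts them, while the $(m+1)-k(S)$ singleton parts correspond to branches carrying a single marking. (The hypothesis $n>m+1$ ensures that $\SM_{1,n}(m+1)$, and hence each $E_S$, is defined.)

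The second step is the $\psi$-class comparison on $\widetilde{\SM}$. One expects a formula $\gamma^{*}\psi_i=\beta^{*}\psi_i+\sum_S c_{i,S}$, where $c_{i,S}$ is supported on $E_S$ and depends only on the part of $S$ containing $i$ (the analogue of the standard comparison of $\psi$-classes across a contraction). Since $\langle\prod_{i=1}^{n}\psi_i^{d_i}\rangle^{m+1}=\int_{\widetilde{\SM}}\prod_{i=1}^{n}(\gamma^{*}\psi_i)^{d_i}$ and $\langle\prod_{i=1}^{n}\psi_i^{d_i}\rangle^{m}=\int_{\widetilde{\SM}}\prod_{i=1}^{n}(\beta^{*}\psi_i)^{d_i}$, the difference
$$
\langle\prod_{i=1}^{n}\psi_i^{d_i}\rangle^{m+1}-\langle\prod_{i=1}^{n}\psi_i^{d_i}\rangle^{m}=\int_{\widetilde{\SM}}\Bigl(\prod_{i=1}^{n}(\gamma^{*}\psi_i)^{d_i}-\prod_{i=1}^{n}(\beta^{*}\psi_i)^{d_i}\Bigr)
$$
telescopes into a sum over $S$ of integrals over $E_S$ of monomials in the restrictions $\beta^{*}\psi_i|_{E_S}$ and in the self-intersection class of $E_S$ --- precisely the type of excess-intersection contribution analyzed in the proof of Theorem 1.

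The third step is to evaluate each $E_S$-contribution. The divisor $E_S$ admits (after normalization) a fibration onto a product of a genus-one moduli factor $\SM_{1,m+1}$, recording the contracted subcurve $Z$ with its $m+1$ attaching points, and a genus-zero factor for each part $S_j$ with $|S_j|\ge 2$, recording the corresponding rational tail with its markings and attaching point; the marked-point classes $\beta^{*}\psi_i|_{E_S}$ pull back from these genus-zero factors, while the normal-bundle classes of $E_S$ pull back from $\psi$-classes at the $m+1$ attaching nodes on the $\SM_{1,m+1}$ factor. Pushing the relevant $\psi$-monomials forward along this fibration reduces the whole contribution to classical genus-zero intersection numbers on the $\SM_{0,\bullet}$ factors, which the classical string and dilaton equations evaluate to the multinomials $\prod_{j=1}^{k(S)}{|S_j|-1\choose\{d_i\}_{i\in S_j}}$, times the single genus-one number $\langle\psi_1\psi_2\cdots\psi_{m+1}\rangle_{1,m+1}$, which equals $m!/24$ by iterated application of the dilaton equation (bottoming out at $\langle\psi_1\rangle_{1,1}=1/24$). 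The sign $(-1)^{\star(S)}$ with $\star(S)=n-m-k(S)-\sum_{j\in S_1\cup\cdots\cup S_{k(S)}}d_j-1$ then records the parity of the number of (negative) normal-bundle-class factors that survive the pushforward, i.e.\ of the codimension of $E_S$ minus the dimensions of the genus-zero fibers; summing over all $S\in{n\brack m+1}$ gives the stated formula. The main obstacle is Step 1 together with the normal-bundle bookkeeping in Step 3: one must pin down the exact local structure of the reduction morphism along each stratum --- the resolution, the exceptional divisors, their normal bundles, and the restrictions of the $\psi_i$ --- after which Steps 2 and 3 are variants of the computation already carried out for the $m$-stable string and dilaton equations.
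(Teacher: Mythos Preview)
Your overall strategy matches the paper's: resolve the reduction map by blowing up $\SM_{1,n}(m)$ along the indeterminacy strata (indexed by $(m+1)$-partitions $S$), compare $\psi$-classes on the blowup via a formula of the shape $q^*\psi_i=p^*\psi_i+\sum E_S$, and evaluate the resulting excess terms on each exceptional divisor $E_S$. However, there is a genuine gap in your Step~3.

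The elliptic factor of the blown-up stratum $\Delta_S$ is not $\SM_{1,m+1}$ but $\SM_{1,m+1}(m)$: the elliptic spine $Z$ of an $m$-stable curve is itself an $(m+1)$-pointed $m$-stable curve (it may carry elliptic $\ell$-fold points for $\ell\le m$), so in fact $\Delta_S\simeq\SM_{1,m+1}(m)\times\prod_{j=1}^{k(S)}\SM_{0,|S_j|+1}$. Consequently the genus-one number produced by the computation on $E_S$ is $\langle\psi_1^{m+1}\rangle^m$ on $\SM_{1,m+1}(m)$, not the ordinary $\langle\psi_1\cdots\psi_{m+1}\rangle_{1,m+1}$, and your appeal to the classical dilaton equation to obtain $m!/24$ does not apply. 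It is true that $\langle\psi_1^{m+1}\rangle^m=m!/24$ (and, by numerical coincidence, this equals the ordinary number you wrote down), but this equality is exactly Theorem~3 of the paper, whose proof \emph{uses} the reduction recursion in a preliminary ``starred'' form with the undetermined constant $\langle\psi_1^{m+1}\rangle^m$ in place of $m!/24$. The paper deliberately splits the argument into (i) prove Theorem~$2^*$ with the unknown constant, then (ii) bootstrap via Theorems~$1^*$ and~$2^*$ to evaluate that constant; your proposal collapses these two steps and thereby assumes what needs to be proved. A secondary correction: the normal bundle of $\Delta_S$ has rank $k(S)$ (only the $k(S)$ nodes attaching rational tails to the spine are being smoothed, not $m+1$), and each summand involves a $\psi$-class from the elliptic factor \emph{and} one from the corresponding genus-zero factor; the resulting integral over $E_S=\P(N)$ is a genuine projective-bundle computation (Proposition~3.1 in the paper, supported by three combinatorial lemmas), not a direct pushforward along a product.
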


Theorem 1 allows one to compute a Witten-Kontsevich number on $\M_{1,n+1}(m)$ in terms of Witten-Kontsevich numbers on $\M_{1,n}(m)$ provided $n>m$. Thus, one can recursively compute all $m$-stable Witten-Kontsevich numbers in terms of Witten-Kontsevich numbers on $\M_{1,m+1}(m)$. Using Theorems 1 and 2 together, we can inductively compute the latter, thus establishing the necessary initial condition for the hierarchy of $m$-stable Witten-Kontsevich numbers.

\begin{theorem}[$m$-stable Initial Condition] Fix a nonnegative integer $m$.
Suppose $d_1, \ldots, d_{m+1}$ satisfy $\sum_{i=1}^{m+1}d_i=m+1$. Then we have
$$
\langle \psi_1^{d_1}\psi_2^{d_2}\ldots\psi_{m+1}^{d_{m+1}} \rangle^{m}=\frac{m !}{24}.
$$
\end{theorem}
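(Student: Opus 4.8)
The plan is to induct on $m$, using the Reduction Recursion (Theorem~2) to pass from level $m-1$ to level $m$ and the $m$-stable string and dilaton equations (Theorem~1) to express every level-$(m-1)$ invariant in terms of the single initial value furnished by the inductive hypothesis. The base case $m=0$ is immediate: the constraint $\sum_{i=1}^{1}d_i=1$ forces $d_1=1$, and $\langle\psi_1\rangle^0=\langle\psi_1\rangle_{1,1}=\tfrac{1}{24}=\tfrac{0!}{24}$ since $\M_{1,1}(0)=\M_{1,1}$. So fix $m\geq1$ and assume Theorem~3 at level $m-1$. Since Theorem~1 reduces any invariant on $\M_{1,n}(m-1)$ with $n>m-1$ to invariants on $\M_{1,m}(m-1)$ --- repeatedly peel off a marked point with exponent $0$ or $1$, one of which must exist because $\sum d_i$ equals the number of marked points --- the inductive hypothesis determines \emph{every} $(m-1)$-stable Witten--Kontsevich number on every $\M_{1,n}(m-1)$ with $n\geq m$, the terminal contributions on $\M_{1,m}(m-1)$ all being $\tfrac{(m-1)!}{24}$.

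Now fix $d_1,\dots,d_{m+1}$ with $\sum d_i=m+1$. Since $m+1>(m-1)+1$, the Reduction Recursion with $n=m+1$ at level $m-1$ gives
\begin{align*}
\langle\textstyle\prod_{i=1}^{m+1}\psi_i^{d_i}\rangle^{m}
=\langle\textstyle\prod_{i=1}^{m+1}\psi_i^{d_i}\rangle^{m-1}
+\frac{(m-1)!}{24}\sum_{S\in{m+1\brack m}}(-1)^{\star(S)}\prod_{j=1}^{k(S)}\binom{|S_j|-1}{\{d_i\}_{i\in S_j}}.
\end{align*}
The bracket $\langle\prod_{i=1}^{m+1}\psi_i^{d_i}\rangle^{m-1}$ lives on $\M_{1,m+1}(m-1)$, one marked point beyond the initial space $\M_{1,m}(m-1)$; relabelling so that $d_{m+1}\in\{0,1\}$ and applying Theorem~1 with $n=m$ at level $m-1$ (string if $d_{m+1}=0$, dilaton if $d_{m+1}=1$) rewrites it as a sum of numbers on $\M_{1,m}(m-1)$ --- each equal to $\tfrac{(m-1)!}{24}$ by the inductive hypothesis --- plus a further error sum indexed by ${m\brack m-1}$. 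After substitution, $\langle\prod_{i=1}^{m+1}\psi_i^{d_i}\rangle^{m}$ becomes a completely explicit rational number: $\tfrac{(m-1)!}{24}$ times the combinatorial factor coming from the string/dilaton main term, plus $\tfrac{(m-1)!}{24}$ times a signed sum of the multinomials $\binom{|S_j|-1}{\{d_i\}_{i\in S_j}}$ over $m$-partitions of $[m+1]$ and over $(m-1)$-partitions of $[m]$.

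It remains to check that this explicit number equals $\tfrac{m!}{24}$ for every admissible $d$, and this combinatorial identity is the step I expect to be the main obstacle. I would prove it by sorting the $m$-partitions of $[m+1]$ according to the position of the relabelled marked point $m+1$: those in which $\{m+1\}$ is a singleton block correspond bijectively, via deletion of that block, to the $(m-1)$-partitions of $[m]$ occurring in the Theorem~1 error sum, and one checks that paired terms have sign exponents $\star$ differing by exactly $1$ while their multinomials and the common prefactor $\tfrac{(m-1)!}{24}$ agree --- so these contributions cancel in pairs. What survives is the contribution of the $m$-partitions in which $m+1$ lies in the unique size-two block, one for each partner $c\in[m]$, together with the string/dilaton main term; a direct evaluation --- using that $\binom{1}{a,b}$ is $1$ when $a+b\leq1$ and $0$ otherwise, and tracking how many indices $i$ have $d_i=0$, $d_i=1$, or $d_i\geq2$ --- should collapse this to $\tfrac{m!}{24}$ uniformly in $d$. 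Two sanity checks help control the signs: when $d=(1,\dots,1)$ every multinomial appearing equals $\binom{1}{1,1}=0$, so the identity reduces to the clean computation $\langle\psi_1\cdots\psi_{m+1}\rangle^{m}=m\,\langle\psi_1\cdots\psi_{m}\rangle^{m-1}=m\cdot\tfrac{(m-1)!}{24}=\tfrac{m!}{24}$; and whenever two or more of the $d_i$ are $\leq1$, the string and dilaton reductions in Theorem~1 must produce the same value of $\langle\prod\psi_i^{d_i}\rangle^{m}$, a stringent constraint on the bookkeeping. Establishing this identity closes the induction; combined with Theorem~1, it then determines all $m$-stable Witten--Kontsevich numbers.
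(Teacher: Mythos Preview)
Your strategy---induct on $m$, use the reduction recursion at level $m-1$ to pass from $\langle\prod\psi_i^{d_i}\rangle^{m-1}$ to $\langle\prod\psi_i^{d_i}\rangle^m$, and use the string/dilaton equation at level $m-1$ to compute the former---is exactly the paper's strategy. Your observation that the constants $(m-1)!/24$ in Theorems 1 and 2 at level $m-1$ are justified by the inductive hypothesis is also correct; the paper handles this more explicitly by first proving ``starred'' versions $1^*$ and $2^*$ in which the constant is left as $\langle\psi_1^{m+1}\rangle^m$, but your inductive framing achieves the same thing.

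The substantive difference is that the paper avoids your combinatorial obstacle entirely. Since the $\Q$-Picard group of $\M_{1,m+1}(m)$ has rank one, the classes $\psi_1,\ldots,\psi_{m+1}$ are all equal in $A^1_\Q(\M_{1,m+1}(m))$, so every Witten--Kontsevich number on this space equals $\langle\psi_1^{m+1}\rangle^m$. It therefore suffices to treat the single exponent vector $(m+1,0,\ldots,0)$. With this choice, any partition whose size-two block contains the index $1$ contributes a vanishing multinomial, so the error sums collapse to simple counts: the string equation yields $\langle\psi_1^{m+1}\rangle^{m-1} = \bigl[1-\binom{m-1}{2}\bigr]\langle\psi_1^m\rangle^{m-1}$ and the reduction recursion yields $\langle\psi_1^{m+1}\rangle^m = \langle\psi_1^{m+1}\rangle^{m-1} + \binom{m}{2}\langle\psi_1^m\rangle^{m-1}$. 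Adding these gives $\langle\psi_1^{m+1}\rangle^m = m\,\langle\psi_1^m\rangle^{m-1}$, and induction finishes.

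Your bijective cancellation between the $\{m+1\}$-singleton partitions in ${m+1\brack m}$ and the partitions in ${m\brack m-1}$ is correct (the $\star$-exponents do differ by exactly $1$), but you then leave the evaluation of the surviving terms---the main string/dilaton term plus the partitions with $m+1$ in the size-two block---as an unproved ``direct evaluation'' depending on the full exponent vector $d$. This is the gap: the case analysis on $d$ is genuinely messier than you suggest, and the paper's Picard-rank-one reduction is precisely what makes it unnecessary. I would recommend invoking that reduction up front and then running your argument only for $d=(m+1,0,\ldots,0)$, where it goes through cleanly.
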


Theorem 3 should be viewed as the $m$-stable analogue of the initial condition $\langle \psi_1 \rangle_{1,1}=1/24$ on $\M_{1,1}$. Evidently, Theorems 1 and 3 taken together completely determine all $m$-stable Witten-Kontsevich numbers. At first glance, it may appear strange that the result of Theorem 3 does not depend on $d_1, \ldots, d_{m+1}$. The reason for this is that the $\Q$-Picard number of $\M_{1,m+1}(m)$ is one, so that $\psi_1=\psi_2=\ldots=\psi_{m+1} \in A^1_{\Q}(\M_{1,m+1}(m))$. 

We should make a remark concerning the content of Theorems 1 and 2 when $m=0, 1$. Theorem 1 is valid when $m=0$ since ${n \brack 0} = \emptyset$, so there are no error terms. Theorem 1 is also valid when $m=1,$ but potentially misleading. In this case, there is exactly one error term (since there is unique partition in $n \brack 1$, namely $[n]$ itself), but this error term is always zero. Indeed, we have $${|S_1|-1 \choose \{d_i\}_{i \in S_1}}=0,$$ since $|S_1|-1=n-1$, but $\sum_{i \in S_1} d_i =\sum_{i=1}^{n}d_i=n+1$ (resp. $n$) in case $(a)$ (resp. (b)). The same reasoning shows that the error term in Theorem 2 also vanishes when $m=1$, i.e. $\langle \prod_{i=1}^{n} \psi_i^{d_i} \rangle^{1} =\langle\prod_{i=1}^{n}\psi_i^{d_i} \rangle^{0}$. In other words, the 1-stable Witten-Kontsevich numbers are identical to the ordinary Witten-Kontsevich numbers and satisfy the same recursions. This can be understood as a consequence of the fact that the natural birational map $\phi: \M_{1,n} \rightarrow \M_{1,n}(1)$ is regular and satisfies $\phi^*\psi_i=\psi_i$. The analogous statement is not true for $m > 1$, and one sees genuinely new invariants for all $m \geq 2$. 

The rest of this paper is organized as follows. In Section 2, we describe the indeterminacy loci of the forgetful and reduction maps. We show that these maps can be resolved by a simple blow-up, and compare pull-backs of $\psi$-classes as a sum of exceptional divisors on this resolution. In Section 3, we prove our main results. The general shape of Theorems 1 and 2 follows easily from the comparison formulae of Section 2 and the push-pull formula, but computing the error terms explicitly requires an elaborate calculation in the Chow ring of the resolution (Section \ref{S:intersection}). In Section 4, we show how to use Theorems 1 and 2 in several sample calculations, and provide a reference table of all $m$-stable Witten-Kontsevich numbers with $m<n \leq 6$ (Table 1).

\emph{Acknowledgements.} The author thanks Yaim Cooper, Norman Do, Maksym Fedorchuk, Paul Norbury, and Aaron Pixton for conversations related to this work.

\section{Resolution of Forgetful and Reduction Maps}

In this section, we construct resolutions of the natural forgetful and reduction maps between the stacks $\SM_{1,n}(m)$. We should remark that Ranganathan, Santos-Parker, and Weiss recently constructed a resolution of the rational map $\SM_{1,n} \dashrightarrow \SM_{1,n}(m)$ using ideas from tropical geometry \cite{rspw1, rspw2}. For our purpose, it is more convenient to go step-by-step, i.e. to consider $\SM_{1,n}(m) \dashrightarrow \SM_{1,n}(m+1)$, so we give an independent argument.

\subsection{Forgetful Map}\label{S:forgetful}
Fix positive integers $m <n$, and consider the rational map
$$F: \SM_{1,n+1}(m) \dashrightarrow \SM_{1,n}(m)$$
obtained by forgetting the $(n+1)^{st}$ marked point. It is immediate from the definition of $m$-stability that if $(C, \{p_i\}_{i=1}^{n+1})$ is $m$-stable, then $(C, \{p_i\}_{i=1}^{n})$ remains $m$-stable unless one of the following conditions holds.
\begin{enumerate}
\item $C$ contains a smooth rational component with three distinguished points, one of which is $p_{n+1}$.
\item $C$ contains an elliptic spine with $m+1$ distinguished points, one of which is $p_{n+1}$. \footnote{Here, an elliptic spine is simply an arithmetic genus one subcurve with no disconnecting nodes, and a distinguished point is simply a marked point or a disconnecting node  (see \cite[Definition 2.9]{smyth_elliptic1}) .}
\end{enumerate}
It follows that $F$ is regular away from the locus of curves satisfying (1) or (2). Of course, it is well-understood how to ``stabilize" a curve $(C, \{p_i\}_{i=1}^{n})$ in case (1); one simply contracts a destabilizing rational tail/bridge to a smooth/nodal point. Furthermore, this stabilization can be carried out simultaneously on the fibers of the universal family $\pi: \C \rightarrow \SM_{1,n+1}(m)$ by taking the map associated to a high power of $\omega_{\pi}(\Sigma_{i=1}^{n}\sigma_i)$.  It follows that $F$ is in fact regular away from the locus of curves satisfying (2).

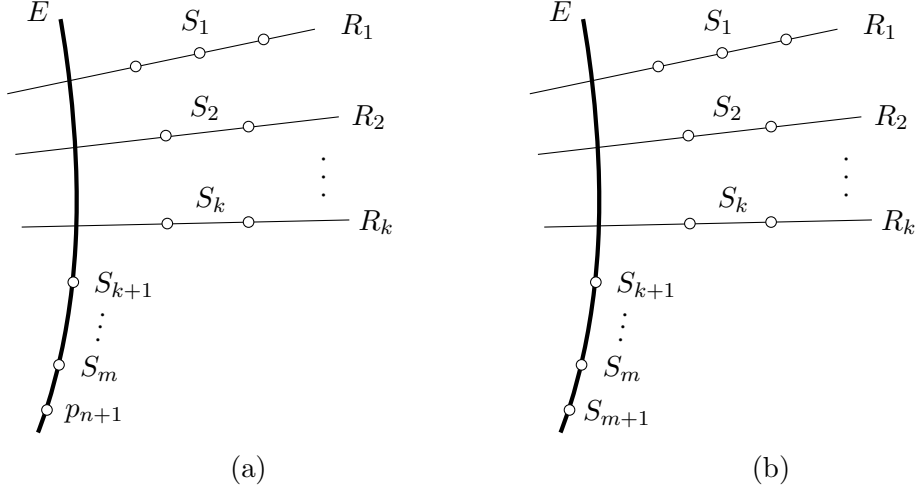
\begin{figure}

\centering
\begin{tikzpicture}[scale=1]
\node at (1.5,4.1) {$E$};
\draw[ultra thick] plot [smooth, tension=1] coordinates {(1.8,4)  (2,1) (1.5,-1.5)} ;
\draw[rotate=5] plot [smooth, tension=1] coordinates {(1.35,2.9)  (5.5,3.4)};
\node at (3.6,3.96) {$S_{1}$};
\node at (5.75,3.9) {$R_1$};
\draw[fill=white] (4.5,3.73) circle(2pt);
\draw[fill=white] (3.65,3.55) circle(2pt);
\draw[fill=white] (2.8,3.37) circle(2pt);
\draw plot [smooth, tension=1] coordinates {(1.2,2.2)  (5.5,2.7)};
\node at (3.72,2.82) {$S_{2}$};
\node at (5.9,2.7) {$R_2$};
\draw[fill=white] (4.3,2.57) circle(2pt);
\draw[fill=white] (3.2,2.45) circle(2pt);
\node[rotate=90] at (5.3,1.9) {$.$ $.$ $.$};
\draw[rotate=-8] plot [smooth, tension=1] coordinates {(1.1,1.4)  (5.4,2.1)};
\node at (3.8,1.6) {$S_{k}$};
\node at (6,1.3) {$R_k$};
\draw[fill=white] (4.3,1.3) circle(2pt);
\draw[fill=white] (3.22,1.28) circle(2pt);

\draw[fill=white] (1.97,.5) circle(2pt);
\node at (2.64,.45) {$S_{k+1}$};
\node[rotate=80] at (2.33,-.097) {$.\!$ $.\!$ $.$};
\draw[fill=white] (1.78,-.6) circle(2pt);
\node at (2.32,-.65) {$S_{m}$};
\draw[fill=white] (1.62,-1.2) circle(2pt);
\node at (2.25,-1.25) {$p_{n+1}$};
\node at (4.3,-2){(a)};
\end{tikzpicture}
\qquad\qquad
\begin{tikzpicture}[scale=1]
\node at (1.5,4.1) {$E$};
\draw[ultra thick] plot [smooth, tension=1] coordinates {(1.8,4)  (2,1) (1.5,-1.5)} ;
\draw[rotate=5] plot [smooth, tension=1] coordinates {(1.35,2.9)  (5.5,3.4)};
\node at (3.6,3.96) {$S_{1}$};
\node at (5.75,3.9) {$R_1$};
\draw[fill=white] (4.5,3.73) circle(2pt);
\draw[fill=white] (3.65,3.55) circle(2pt);
\draw[fill=white] (2.8,3.37) circle(2pt);
\draw plot [smooth, tension=1] coordinates {(1.2,2.2)  (5.5,2.7)};
\node at (3.72,2.82) {$S_{2}$};
\node at (5.9,2.7) {$R_2$};
\draw[fill=white] (4.3,2.57) circle(2pt);
\draw[fill=white] (3.2,2.45) circle(2pt);
\node[rotate=90] at (5.3,1.9) {$.$ $.$ $.$};
\draw[rotate=-8] plot [smooth, tension=1] coordinates {(1.1,1.4)  (5.4,2.1)};
\node at (3.8,1.6) {$S_{k}$};
\node at (6,1.3) {$R_k$};
\draw[fill=white] (4.3,1.3) circle(2pt);
\draw[fill=white] (3.22,1.28) circle(2pt);

\draw[fill=white] (1.97,.5) circle(2pt);
\node at (2.64,.45) {$S_{k+1}$};
\node[rotate=80] at (2.33,-.097) {$.\!$ $.\!$ $.$};
\draw[fill=white] (1.78,-.6) circle(2pt);
\node at (2.32,-.65) {$S_{m}$};
\draw[fill=white] (1.62,-1.2) circle(2pt);
\node at (2.25,-1.25) {$S_{m+1}$};
\node at (4.3,-2){(b)};
\end{tikzpicture}
\caption{Topological type of a generic curve in (a) $\Delta_S \subset \SM_{1,n+1}(m)$ (see Section \ref{S:forgetful}), and (b)  $\Delta_S \subset \SM_{1,n}(m)$ (see Section \ref{S:reduction}) } \label{F1}
\end{figure}
Now we consider the problem of stabilizing curves in case (2). We can describe this locus of curves as follows. For each $S \in {n \brack m},$ define $\Delta_{S} \subset \SM_{1,n+1}(m)$ to be the closed substack
$$
\Delta_{S}:=\cap_{i=1}^{k}\Delta_{0,S_i},
$$
where $k:=k(S)$ is the index of $S$. Equivalently,
 $$\Delta _S \simeq \SM_{1,m+1}(m) \times \SM_{0,|S_1|+1} \times \cdots \times  \SM_{0,|S_k|+1}$$
 is the boundary stratum parametrizing $m$-stable curves with $k$ rational tails, marked by $S_1, \ldots, S_k$, and an elliptic spine, marked by $S_{k+1} \cup \ldots \cup S_m \cup \{p_{n+1}\}$ (See Figure 1(a)). Note that these boundary strata are pairwise disjoint in $\SM_{1,n+1}(m)$ (this is an easy consequence of the fundamental decomposition of an $m$-stable curve \cite[Lemma 3.1]{smyth_elliptic1}), and the locus of curves satisfying (2) is precisely  $\cup_{S \in  {n \brack m}} \Delta_{S}.$

Now it is clear that if $(C, \{p_i\}_{i=1}^{n+1}) \in \cup_{S \in  {n \brack m}} \Delta_{S}$, then stabilizing $(C, \{p_i\}_{i=1}^{n})$ should entail contracting the elliptic $m$-spine (created by forgetting $p_{n+1}$) to an elliptic $m$-fold point. However, this requires a choice of moduli of attaching data for the elliptic $m$-fold point (see \cite[Section 2.2]{smyth_elliptic2}), which means we cannot stabilize the universal family $\C \rightarrow \SM_{1,n+1}(m)$ without a birational modification of the base. Happily, the required modification is as simple as it could be. Indeed, let $p: \X \rightarrow \SM_{1,n+1}(m)$ be the blow-up of $\SM_{1,n+1}(m)$ along $\cup_{S \in {n \brack m}}\Delta_{S},$ and let $\C \rightarrow \X$ be the pull-back of the universal family. We will show that it is possible to stabilize $\C \rightarrow \X$, so that there is an induced regular map $\X \rightarrow \SM_{1,n}(m)$. 

We will need the following bit of notation: For each $S \in {n \brack m},$ let $E_S \subset \X$ denote the exceptional divisor lying over $\Delta_{S}.$ Also, if $I \subset [n]$ is any index set, we let $\Delta_{I} \subset \X$ denote the strict transform of $\Delta_{0,I} \subset \SM_{1,n+1}(m)$. Finally, if $\C \rightarrow \X$ is the pullback of the universal curve over $\SM_{1,n+1}(m)$, note that there is a unique irreducible Cartier divisor in $\C$ which comprises the elliptic spines of the fibers of $\C|_{E_S} \rightarrow E_S$; we will call this Cartier divisor $E_{S}^{1}$.

\begin{proposition}[Resolution of the forgetful map]\label{P:forgetful}
With notation as above, consider the commutative diagram
\[
\xymatrix{ 
&\tilde{\C} \ar[rd]^{c} \ar[ld]_{b} \ar[dd]^{\tilde{\pi}}&\\
\C  \ar[rd]^{\pi}&& \C' \ar[ld]_{\pi'}\\
&\X  \ar[ld]_{p} \ar[rd]^{q} \ar@/^1pc/[lu]^{\sigman} \ar@/_1pc/[ru]_{\sigmapn}  &\\
\SM_{1,n+1}(m) \ar@{-->}[rr]&&\SM_{1,n}(m)\\
}
\]
where
\begin{itemize}
\item[(1)] $\left( \pi, \{\sigma_i\}_{i=1}^{n} \right)$ is the pull-back of the universal family from $\SM_{1,n+1}(m)$ to $\X$.
\item[(2)] $b$ is the blow-up of $\C$ along the smooth codimension-two locus $\cup_{S \in {n \brack m}} \cup_{i=k(S)+1}^{m}(\sigma_i \cap  E_S^1),$ and $\tilde{\sigma}_i, \tilde{E}_{S}, \tilde{E}^1_S$ are the strict transforms of $\sigma_i, E_S, E^1_S$.
\item[(3)]$c$ is the birational contraction associated to a high power of 
$$
\L=\omega_{\tilde{\pi}}\left(\Sigma_{i=1}^{n}\tilde{\sigma_i}+\Sigma_{S \in {n \brack m}}\tilde{E}^1_S\right)
$$ \
and $\sigma_i':=c \circ \sigma_i$ for $i=1, \ldots, n$.
\end{itemize}
Then $(\C' \rightarrow \X, \{\sigma_i'\}_{i=1}^{n})$ is a flat family of $m$-stable, $n$-pointed curves. In particular, there is an induced regular map $q: \X \rightarrow \SM_{1,n}(m)$.
\end{proposition}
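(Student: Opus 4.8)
The plan is to verify that the family $\C' \to \X$ constructed in (1)--(3) is a flat family of $m$-stable $n$-pointed curves by checking the defining conditions fiberwise, then invoking the universal property of $\SM_{1,n}(m)$. First I would observe that away from the exceptional locus $\cup_S E_S$, both blow-ups $p$ and $b$ are isomorphisms, $c$ agrees with the classical stabilization morphism (contracting destabilizing rational tails and bridges created by forgetting $p_{n+1}$, which by the discussion preceding the proposition accounts for case (1)), and the resulting family is the pullback of the universal family over $\SM_{1,n}(m)$ via the regular part of the forgetful map; so the content is entirely concentrated over the divisors $E_S$. Fix $S \in {n \brack m}$ with index $k=k(S)$; the generic point of $E_S$ parametrizes a curve whose stabilization after forgetting $p_{n+1}$ requires contracting the elliptic $(m+1)$-spine $E \cup (\text{rational tails } R_{k+1}, \ldots, R_m)$ to an elliptic $m$-fold point (the new point $p_i$, $i > k$, being where the tail $R_i$ met the spine), while the rational tails $R_1, \ldots, R_k$ marked by $S_1, \ldots, S_k$ survive.

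Next I would analyze the total space $\tilde{\C}$ over a transverse slice to $E_S$. The key claim is that the line bundle $\L = \omega_{\tilde\pi}(\sum \tilde\sigma_i + \sum_S \tilde E^1_S)$ is $\tilde\pi$-semiample with the correct stable model: it should be trivial on exactly the components that need to be contracted — the elliptic spine $\tilde E^1_S$ together with the rational bridges between it and the surviving tails — and ample elsewhere. Here the blow-up $b$ along $\cup_S\cup_{i>k(S)}(\sigma_i \cap E^1_S)$ plays its essential role: it separates the sections $\sigma_i$ (for $i > k(S)$) from the spine $E^1_S$ so that, after contraction, these sections land at the elliptic $m$-fold point with distinct branches, i.e. they supply precisely the $m$ pieces of attaching data needed to define a point of $\SM_{1,n}(m)$ along $E_S$ (cf. \cite[Section 2.2]{smyth_elliptic2}). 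I would compute the degree of $\L$ on each component of a fiber over a point of $E_S$ — using adjunction and the fact that $\tilde E^1_S$ has self-intersection $-1$ along the exceptional direction — to confirm $\deg \L|_{\tilde E^1_S} = 0$ and $\deg \L > 0$ on every surviving component, hence $\L^N$ is $\tilde\pi$-globally generated and $c$ contracts exactly the elliptic spine (plus any transient bridges) to an elliptic $m$-fold point.

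Then I would check $m$-stability of the resulting fibers: the contracted curve has an elliptic $m$-fold point (allowable since $k \leq m$ and we contract an $(m+1)$-pointed spine to an $m$-fold point — or more precisely the genus-one Gorenstein singularity of type $m$), nodes elsewhere, the $n$ sections $\sigma'_i$ are distinct from each other and from the singular locus in the appropriate sense, and the line bundle $\L$ being $\pi'$-ample gives exactly the stability (= ampleness of $\omega_{\pi'}(\sum\sigma'_i)$ suitably interpreted) required by the definition of $m$-stability in \cite{smyth_elliptic1}. Flatness of $\C' \to \X$ follows from constancy of the arithmetic genus (it is $1$ on every fiber, since contraction of a genus-one subcurve to a genus-one singularity preserves arithmetic genus) together with $\X$ being reduced (indeed smooth, as the blow-up of a smooth stack along a smooth center) and $\C'$ having no embedded components by construction. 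Finally, the universal property of the moduli stack $\SM_{1,n}(m)$ yields the induced classifying morphism $q: \X \to \SM_{1,n}(m)$, and commutativity of the diagram is automatic since $q$ agrees with $F \circ p$ on the dense open where $F$ is regular.

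The main obstacle I anticipate is the local analysis near $E_S$ — specifically, verifying that a single blow-up of the base (rather than an iterated or weighted blow-up) suffices to make the spine-contraction work in families, and pinning down the scheme structure of $\C'$ at the elliptic $m$-fold point so that the attaching data is the expected one. This requires understanding the normal bundle of $\Delta_S$ in $\SM_{1,n+1}(m)$ and how $\omega_{\tilde\pi}$ restricts to the exceptional locus; the genus-one Gorenstein singularities of type $m \geq 2$ are non-planar for $m \geq 3$, so one must be careful that the contraction $c$ produces the correct (Gorenstein, non-smoothable-in-obvious-ways) singularity and not something else, and that this happens uniformly over $E_S$. I would handle this by an explicit local coordinate computation on a versal deformation, reducing to the model treated in \cite{smyth_elliptic2}.
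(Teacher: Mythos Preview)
Your proposal is correct and follows essentially the same strategy as the paper: verify that $\L$ is $\tilde{\pi}$-semiample, check that away from $\cup_S E_S$ the contraction $c$ is ordinary stabilization, check that over $E_S$ the contraction replaces the elliptic spine by an elliptic $m$-fold point, and deduce flatness from reducedness of $\X$. The paper's argument is somewhat more streamlined than yours in two places. First, rather than computing degrees of $\L$ on components and doing a local coordinate analysis to identify the singularity, the paper simply invokes \cite[Lemma 2.12]{smyth_elliptic1}, which already proves $H^1(\tilde C_x,\L_x)=0$ fiberwise and shows that the associated contraction over a DVR produces a flat Gorenstein family with an elliptic $m$-fold point; the base-change statement (push-forward commutes with base change once $H^1$ vanishes) then reduces the whole question over $E_S$ to this one-parameter case. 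Second, for flatness the paper appeals to the valuative criterion (valid since $\X$ is reduced) rather than constancy of the Hilbert polynomial. Your anticipated obstacle---that a single blow-up suffices and that the contraction really produces the correct Gorenstein singularity uniformly---is exactly the content packaged into that lemma, so your instinct to reduce to the model in \cite{smyth_elliptic1} is on target.
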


\begin{proof}
As in the statement, let $\C \rightarrow \X$ be the pullback of the universal family over $\SM_{1,n+1}(m)$, let $b: \tilde{\C} \rightarrow \C$ be the blowup of $\C$, and consider the line-bundle
$$
\L:=\omega_{\tilde{\pi}}\left(\sum_{i=1}^{n}\tilde{\sigma_i}+\sum_{S \in {n \brack m}}\tilde{E}^1_S\right)
$$ 
We claim that $\L^m$ is $\tilde{\pi}$-semiample for $m>>0$, so that we have a morphism
\[
\xymatrix{\tilde{\C} \ar[rr]^{c} \ar[rd]_{\tilde{\pi}} &&\C' \ar[ld]^{\pi'} &\!\!\!\!\!\!\!\!\!\!\!\!\!\!\!\!:=\Proj \left( \oplus_{m \geq 0} \tilde{\pi}_*\L^m \right)\\
&\X&&
}
\]
This follows from the proof of Lemma 2.12 in \cite{smyth_elliptic1}. Indeed, the argument given there shows that $H^1(\tilde{C}_x, L_x)=0$ for each geometric point $x \in \X$, and the statement follows easily. It only remains to prove that $\C'/\X$ is a flat family of $n$-pointed $m$-stable curves. 

To see this, first note that away from $\cup_{S \in {n \brack m}}\tilde{E}_{S}$, $\L$ is just the standard twisted dualizing sheaf, and thus has the effect of contracting semistable rational tails. Thus, we only need to check that the fibers of $\C'$ are $m$-stable over $\cup_{S \in {n \brack m}}\tilde{E}_{S}$. To see this, consider any map $\Delta \rightarrow \X$ (where $\Delta$ is the spectrum of a DVR) sending the generic point into $p^{-1}(\mathcal{M}_{1,n+1})$, and the closed point into one of the divisors $\tilde{E}_{S}$. Since $H^1(C_x,\L_x)=0$ for all geometric points $x \in \X$, push-forward commutes with base change, and we have that $\C'|_{\Delta}$ is the image of the map associated to a large power of $\L|_{(\tilde{\C}|_{\Delta})}.$ Now Lemma 2.12 in \cite{smyth_elliptic1} implies that $\C'|_{\Delta}$ is a flat family of Gorenstein curves over $\Delta$, in which the elliptic spine has been replaced by an elliptic $m$-fold point. It follows that the fibers of $\C' \rightarrow \X$ are all $n$-pointed, $m$-stable curves. Furthermore, since $\X$ is reduced, the valuative criterion for flatness implies that $\C' \rightarrow \X$ is flat.
\end{proof}

The preceding proposition gives the following comparison formula for pull-backs of $\psi_i$ classes from $\SM_{1,n+1}(m)$ and $\SM_{1,n}(m)$.

\begin{corollary}\label{C:psi-forgetful} For $i \in \{1, \ldots, n\}$, we have
\begin{align*}
q^*\psi_i+\Delta_{\{i,n+1\}}&=p^*\psi_i+\sum_{ \substack{S \in {n \brack m}\\ \{i\} \in S}}E_{S}
\end{align*}
in $A_{\Q}^1(\X)$.
\end{corollary}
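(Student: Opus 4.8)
The plan is to compare the two line bundles $q^*L_i$ and $p^*L_i$ on $\X$ directly, using the geometric description of the resolution in Proposition \ref{P:forgetful}. Recall $\psi_i = c_1(L_i)$ with $L_i = \sigma_i^*\omega_\pi$ on each relevant space, so $p^*\psi_i = c_1\big((p\circ\sigma_i)^*\omega_{\text{univ}}\big)$ on the source side, while $q^*\psi_i = c_1\big((\sigma_i')^*\omega_{\pi'}\big)$ on the target side. The key point is that $\sigma_i' = c\circ\tilde\sigma_i$ and $(\pi,\sigma_i)$ pulls back to $(\tilde\pi,\tilde\sigma_i)$ via $b$, so both pullbacks of $\psi_i$ can be computed on the common model $\tilde\C$. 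Explicitly, $p^*\psi_i = \tilde\sigma_i^*\big(b^*\omega_\pi\big)$ and $q^*\psi_i = \tilde\sigma_i^*\big(c^*\omega_{\pi'}\big)$, so the whole problem reduces to understanding the discrepancy divisor $c^*\omega_{\pi'} - b^*\omega_\pi$ on $\tilde\C$ and then pulling it back along the section $\tilde\sigma_i$.

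First I would record the two elementary comparison formulas for dualizing sheaves under the birational modifications $b$ and $c$. For the blow-up $b:\tilde\C\to\C$ along the codimension-two locus $Z := \cup_{S}\cup_{i>k(S)}(\sigma_i\cap E_S^1)$, we have $\omega_{\tilde\pi} = b^*\omega_\pi(\mathcal E)$ where $\mathcal E$ is the $b$-exceptional divisor (a $\P^1$-bundle over $Z$), since $b$ is a blow-up of a smooth codimension-two center inside the total space and $\tilde\pi = \pi\circ b$. For the contraction $c:\tilde\C\to\C'$ — which is the morphism associated to a large power of $\L = \omega_{\tilde\pi}(\sum\tilde\sigma_i + \sum\tilde E_S^1)$ — the projection formula gives $c^*\omega_{\pi'} = \omega_{\tilde\pi}(-\sum a_j F_j)$ where the $F_j$ are the $c$-exceptional divisors (the semistable rational tails being contracted, together with the components of the $E_S$-fibers not surviving to $\C'$) and the $a_j$ are determined by requiring $c^*\omega_{\pi'}$ to be trivial on each contracted curve. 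Combining these, $c^*\omega_{\pi'} = b^*\omega_\pi(\mathcal E - \sum a_j F_j)$, hence
\[
q^*\psi_i - p^*\psi_i = \tilde\sigma_i^*\Big(\mathcal E - \sum_j a_j F_j\Big)\quad\text{in } A^1_\Q(\X).
\]
Now I would evaluate each term by restricting to the section $\tilde\sigma_i$, which is where the geometry enters.

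The core computation is: $\tilde\sigma_i^*\mathcal E$ and each $\tilde\sigma_i^* F_j$ are either zero or an explicitly identified boundary divisor on $\X$. The section $\tilde\sigma_i$ meets the $b$-exceptional $\mathcal E$ exactly along the locus where $\sigma_i$ originally met the blown-up center $Z$; for a fixed $i$, $\sigma_i$ meets $\sigma_i\cap E_S^1$ (with $i$ a singleton block of $S$) — so $\tilde\sigma_i^*\mathcal E = -\sum_{S:\{i\}\in S} E_S$ after accounting for the strict-transform bookkeeping (the sign reflecting that the section, being itself separated from $\mathcal E$ after blow-up in the fiber direction, picks up the exceptional class with the self-intersection sign of the $\P^1$-fiber). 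Among the contracted divisors $F_j$, the relevant ones meeting $\tilde\sigma_i$ are (i) the rational bridge $\Delta_{\{i,n+1\}}$, whose contraction identifies $p_i$ with $p_{n+1}$'s old position and contributes $+\Delta_{\{i,n+1\}}$, and (ii) nothing else, since the section $\sigma_i$ with $i\in S_{>k}$ lies on the elliptic spine component $E_S^1$ which is \emph{not} contracted by $c$ (it survives as part of the elliptic $m$-fold point), while for $i$ in a rational-tail block $S_j$ the section lies on a tail that is not destabilized. Assembling these contributions yields $q^*\psi_i - p^*\psi_i = -\Delta_{\{i,n+1\}} + \sum_{S:\{i\}\in S} E_S$, which is the claimed identity.

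The main obstacle I anticipate is the careful bookkeeping in the blow-up step (2): on the fiber over a point of $E_S$, the section $\sigma_i$ (for $i$ a singleton not on a tail) passes through the point $\sigma_i\cap E_S^1$ which is a smooth point of the fiber lying on the elliptic spine, and blowing up the total space there both separates $\tilde\sigma_i$ from $\tilde E_S^1$ and introduces the exceptional $\P^1$; I need to check precisely which of the resulting fiber components survive the contraction $c$ and with what multiplicities, so that the coefficients $a_j$ — and in particular the coefficient of $E_S$ appearing after restriction to $\tilde\sigma_i$ — come out to exactly $1$. This is a local analysis near the elliptic $m$-fold point, controlled by Lemma 2.12 of \cite{smyth_elliptic1}, but it requires keeping track of the chain of components and the intersection pattern of $\tilde\sigma_i$ with them. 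A secondary subtlety is verifying that the strict transform $\Delta_{\{i,n+1\}}$ on $\X$ is disjoint from all the $E_S$ (true by the disjointness of the $\Delta_S$ noted before the proposition, together with the fact that $\Delta_{\{i,n+1\}}$ is not among the blown-up centers since $\{i,n+1\}$ is not one of the $S_j$), so that no further correction terms relating $\Delta_{\{i,n+1\}}$ and the $E_S$ arise.
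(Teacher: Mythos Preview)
Your approach is exactly the paper's: compute both $p^*\psi_i$ and $q^*\psi_i$ as pullbacks along $\tilde\sigma_i$ of the two relative dualizing sheaves on the common model $\tilde\C$, using the blow-up formula for $b^*\omega_\pi$ and the contraction formula for $c^*\omega_{\pi'}$, then subtract. Your final identity is correct.

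However, one of your supporting claims is false and the sign discussion is muddled. You assert that the elliptic spine $\tilde E_S^1$ is \emph{not} contracted by $c$; in fact it is---contracting the spine to an elliptic $m$-fold point is precisely what $c$ does, and the paper lists $\tilde E_S^1$ explicitly among the $c$-exceptional divisors, with
\[
c^*\omega_{\pi'}=\omega_{\tilde\pi}\Bigl(\sum_{S}\tilde E_S^1-\sum_{j=1}^n \tilde R^1_{\{j,n+1\}}\Bigr).
\]
The reason your error does not propagate is that the blow-up $b$ separates $\tilde\sigma_i$ from $\tilde E_S^1$, so $\tilde\sigma_i^*\tilde E_S^1=0$ regardless. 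Likewise, your ``strict-transform bookkeeping'' sign argument for $\tilde\sigma_i^*\mathcal E$ is not right: the clean statement is $b^*\omega_\pi=\omega_{\tilde\pi}\bigl(-\sum_{S,j}Z_{\{S,j\}}\bigr)$, and $\tilde\sigma_i$ meets $Z_{\{S,j\}}$ transversally along (a section over) $E_S$ exactly when $j=i$, i.e.\ when $\{i\}\in S$; hence $p^*\psi_i=\tilde\sigma_i^*\omega_{\tilde\pi}-\sum_{S:\{i\}\in S}E_S$ with no self-intersection gymnastics needed. With these two corrections your write-up matches the paper's proof line for line.
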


\begin{proof}
We have
\begin{align*}
p^*\psi_i&=c_1(\sigma_i^*\omega_{\pi})\\
q^*\psi_i&=c_1(\sigma_i'^*\omega_{\pi'})\\
\end{align*}
We compare these two classes as follows. Since $b$ is just a standard blow-up, we have
$$
b^*\omega_{\pi}=\omega_{\tilde{\pi}}(-\Sigma_{S \in {n \brack m}} \Sigma_{i=k(S)+1}^{m} Z_{\{S,i\}}),
$$
where $Z_{\{S,i\}}$ is the exceptional divisor over $\sigma_i \cap E_S \subset \C$. Restricting this equation to $\tilde{\sigma}_i$ gives
\[p^*\psi_i=\tilde{\sigma}_i^*\omega_{\tilde{\pi}}-\sum_{\substack{S \in {n \brack m}\\ \{i\} \in S }}E_{S}.
\eqno{(\dagger)}
\]
On the other hand, because $c$ is the contraction associated to $\L$, we have
$$
c^*\omega_{\pi'}=\omega_{\tilde{\pi}}(D),
$$
where $D$ is a linear combination of $c$-exceptional divisors. The $c$-exceptional are precisely $\tilde{E}_{S}^1$ (for $S \in {n \brack m}$) and $\tilde{R}^1_{\{i,n+1\}}$ (for $i \in [n]$). (Here, $\tilde{R}^1_{\{i,n+1\}}$ denotes the Cartier divisor of distinguished rational tails in the fibers of $\tilde{\C}|_{\Delta_{\{i,n+1\}}} \rightarrow \Delta_{\{i,n+1\}}$). The coefficients of these divisors in $D$ are easily determined by the requirement that $c^*\omega_{\pi'}$ have degree zero on contracted curves, and we obtain
$$
c^*\omega_{\pi'}=\omega_{\tilde{\pi}}(\Sigma_{S \in {n \brack m}}\tilde{E}_{S}^1-\Sigma_{i=1}^{n}\tilde{R}^1_{\{i,n+1\}})
$$
Since $\tilde{\sigma}_i$ never intersects $\tilde{E}_S^1$, restricting this equation to $\tilde{\sigma}_i$ gives
\[
q^*\psi_i=\sigma_i^*\omega_{\tilde{\pi}}-\Delta_{\{i,n+1\}}
\eqno{(\dagger\dagger)}
\]
Combining $(\dagger)$ and $(\dagger \dagger)$ gives the desired result.
\end{proof}

\subsection{Reduction Map}\label{S:reduction}
Fix positive integers $n>m+1$, and consider the natural birational map
$$
R: \SM_{1,n}(m) \dashrightarrow \SM_{1,n}(m+1),
$$
which is well-defined away from the locus of curves containing an elliptic $(m+1)$-spine. This locus can be described as follows:  for each $S \in {n \brack m+1}$, define $\Delta_{S} \subset \SM_{1,n}(m)$ be the locally closed substack
$$
\Delta_{S}:=\cap_{i=1}^{k}\Delta_{S_i}
$$
where $S=\{S_1, \ldots, S_{m+1}\}$ is a partition of index $k$. Equivalently,
 $$\Delta _S \simeq \SM_{1,m+1}(m) \times \SM_{0,|S_1|+1} \times \cdots \times  \SM_{0,|S_k|+1}$$
 is the boundary stratum parametrizing $m$-stable curves with $k$ rational tails, marked by $S_1, \ldots, S_k$, and an elliptic spine, marked by $S_{k+1} \cup \ldots \cup S_{m+1}.$ These substacks are pairwise disjoint in $\SM_{1,n}(m)$, and the locus of curves containing an elliptic $(m+1)$-spine is precisely  $\cup_{S \in  {n \brack m+1}} \Delta_{S}$.

As in the preceding section, the indeterminacy of this rational map is resolved by a simple blow-up of the base. Indeed, let $p: \X \rightarrow \SM_{1,n}(m)$ be the blow-up of $\SM_{1,n}(m)$ along $\cup_{S \in {n \brack m+1}}\Delta_{S},$ let $E_{S}$ denote the exceptional divisor lying over $\Delta_{S}$, and let $E_{S}^1 \subset \C$ denote the Cartier divisor comprising the elliptic $(m+1)$-spines of the fibers of $\C|_{E_S} \rightarrow E_S$. Then we have

\begin{proposition}[Resolution of the reduction map]\label{P:reduction}
With notation as above, consider the commutative diagram
\[
\xymatrix{ 
&\tilde{\C} \ar[rd]^{c} \ar[ld]_{b} \ar[dd]^{\tilde{\pi}}&\\
\C  \ar[rd]^{\pi}&& \C' \ar[ld]_{\pi'}\\
&\X  \ar[ld]_{p} \ar[rd]^{q} \ar@/^1pc/[lu]^{\sigman} \ar@/_1pc/[ru]_{\sigmapn}  &\\
\SM_{1,n}(m) \ar@{-->}[rr]&&\SM_{1,n}(m+1)\\
}
\]
where
\begin{itemize}
\item[(1)] $\left( \pi, \{\sigma_i\}_{i=1}^{n} \right)$ is the pull-back of the universal family from $\SM_{1,n}(m)$ to $\X$.
\item[(2)] $b$ is the blow-up of $\C$ along the smooth codimension-two locus $\cup_{S \in {n \brack m+1}} \cup_{i=k(S)+1}^{m+1}(\sigma_i \cap  E_S),$ and $\tilde{\sigma}_i, \tilde{E}_{S}, \tilde{E}^1_S$ are the strict transforms of $\sigma_i, E_S, E^1_S$.
\item[(3)]$c$ is the birational contraction associated to a high power of 
$$
\L=\omega_{\tilde{\pi}}\left(\Sigma_{i=1}^{n}\tilde{\sigma_i}+\Sigma_{S \in {n \brack m+1}}\tilde{E}^1_S\right)
$$ \
and $\sigma_i':=c \circ \sigma_i$ for $i=1, \ldots, n$.
\end{itemize}
Then $(\C' \rightarrow \X, \{\sigma_i'\}_{i=1}^{n})$ is a flat family of $n$-pointed, $(m+1)$-stable curves. In particular, there is an associated regular map $q: \X \rightarrow \SM_{1,n}(m+1)$.
\end{proposition}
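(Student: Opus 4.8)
The plan is to follow the proof of Proposition~\ref{P:forgetful} almost verbatim, since the reduction map is set up in exactly the same way as the forgetful map; the one genuine simplification is that no marked point is dropped here, so the contraction $c$ will be an isomorphism away from the exceptional divisors and over each $E_S$ it will simply collapse the elliptic $(m+1)$-spine to an elliptic $(m+1)$-fold point.

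First I would record the elementary geometry of the two blow-ups. Since the strata $\Delta_S$ for $S\in{n\brack m+1}$ are smooth, pairwise disjoint, and of codimension $k(S)$ in $\SM_{1,n}(m)$, the map $p:\X\to\SM_{1,n}(m)$ is a smooth modification with smooth, disjoint exceptional divisors $E_S$; moreover, for $k(S)<i\le m+1$ the section $\sigma_i$ passes through the relative elliptic spine $E_S^1\subset\C$, and $\sigma_i\cap E_S$ is smooth of codimension two in $\C$, so the blow-up $b:\tilde{\C}\to\C$ of item~(2) is well-defined and its strict transforms $\tilde\sigma_i,\tilde E_S,\tilde E_S^1$ behave as in Section~\ref{S:forgetful}. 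A short fibrewise degree computation then shows that the line bundle $\L$ of item~(3) has degree $0$ on the elliptic spine $\tilde E_S^1$ of each fibre over $E_S$ and positive degree on every other component, which is the first indication that the $\L$-contraction collapses exactly the spines.

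Next I would construct the contraction. Exactly as in Proposition~\ref{P:forgetful}, the essential input is the vanishing $H^1(\tilde{C}_x,\L_x)=0$ for every geometric point $x\in\X$, which is precisely the computation carried out in the proof of Lemma~2.12 of \cite{smyth_elliptic1}, applied to the geometric fibres of $\tilde\pi$. From this one gets that $\L^N$ is $\tilde\pi$-semiample for $N\gg 0$, that $\tilde{\pi}_*\L^N$ is locally free and commutes with base change, and hence that $\C':=\Proj(\oplus_{N\geq 0}\tilde{\pi}_*\L^N)$ is a family over $\X$ admitting the birational contraction $c:\tilde{\C}\to\C'$ with marked sections $\sigma_i'$ as in item~(3). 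It then remains to identify $\C'/\X$ as a flat family of $n$-pointed $(m+1)$-stable curves. Over the complement of $\bigcup_S E_S$ nothing has been blown up, $\L$ is the twisted dualizing sheaf of an $m$-stable curve, and an $m$-stable curve with no elliptic $(m+1)$-spine is already $(m+1)$-stable, so $c$ is an isomorphism there. Over $\bigcup_S E_S$ I would argue with the valuative criterion exactly as in the proof of Proposition~\ref{P:forgetful}: for any morphism $\Delta\to\X$ with $\Delta$ the spectrum of a DVR whose generic point maps into $p^{-1}(\mathcal{M}_{1,n})$ and whose closed point maps into some $E_S$, base change (from the $H^1$-vanishing) identifies $\C'|_\Delta$ with the image of $\tilde{\C}|_\Delta$ under the map attached to a high power of $\L|_{\tilde{\C}|_\Delta}$, and Lemma~2.12 of \cite{smyth_elliptic1} shows that this image is a flat family of Gorenstein curves of arithmetic genus one in which the elliptic $(m+1)$-spine has been replaced by an elliptic $(m+1)$-fold point, with the sections $\sigma_i'$ landing on the rational tails created by $b$ rather than at the new singular point. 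Thus every fibre of $\C'\to\X$ is an $n$-pointed $(m+1)$-stable curve, and since $\X$ is reduced the valuative criterion for flatness shows that $\C'\to\X$ is flat. The universal property of $\SM_{1,n}(m+1)$ then furnishes the regular map $q$, and the resulting diagram commutes because it does so over the dense open $p^{-1}(\mathcal{M}_{1,n})$.

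The main obstacle, just as in Proposition~\ref{P:forgetful}, is this last verification over the exceptional locus: one must be sure that the $\L$-contraction genuinely produces an elliptic $(m+1)$-fold point, and not some other Gorenstein singularity of arithmetic genus one, and that the $m+1-k(S)$ singleton sections meeting the spine are resolved onto rational tails of that singularity instead of colliding with it. This is exactly why the blow-up center in item~(2) is taken to be $\bigcup_S\bigcup_{i>k(S)}(\sigma_i\cap E_S)$, and the verification itself is the content of the DVR analysis underlying Lemma~2.12 of \cite{smyth_elliptic1}; once that input is granted, the rest is formal bookkeeping identical to the forgetful-map case.
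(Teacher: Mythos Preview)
Your proposal is correct and follows essentially the same approach as the paper's own proof, which simply says ``arguing precisely as in the proof of Proposition~\ref{P:forgetful}'' and invokes Lemma~2.12 of \cite{smyth_elliptic1} to conclude that $\C'$ is a flat family of $(m+1)$-stable curves. You have spelled out the details more fully---the $H^1$-vanishing, the base-change argument, the valuative criterion for flatness, and the role of the blow-up $b$ in separating the singleton sections from the new elliptic $(m+1)$-fold point---but the logical skeleton is identical.
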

\begin{proof}
Arguing precisely as in the proof of Proposition \ref{P:forgetful}, we see that the map associated to $\L$ contracts elliptic $(m+1)$-spines in the fibers of $\tilde{\C} \rightarrow \X$ by elliptic $(m+1)$-fold points, so that
$$\C':=\Proj \left( \oplus_{m \geq 0} \tilde{\pi}_*\L^m \right)$$
is a flat family of $(m+1)$-stable of curves.
\end{proof}

Arguing exactly as in the proof of Corollary \ref{C:psi-forgetful}, we obtain
\begin{corollary}\label{C:psi-reduction}For $i \in \{1, \ldots, n\}$, we have
\begin{align*}
q^*\psi_i&=p^*\psi_i+\sum_{ \substack{S \in {n \brack m+1}\\ \{i\} \in S}}E_{S}
\end{align*}
in $A^1_{\Q}(\X)$.
\end{corollary}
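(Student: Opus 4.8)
The plan is to run the argument of Corollary \ref{C:psi-forgetful} essentially verbatim, using the resolution diagram of Proposition \ref{P:reduction}, and to observe that two features of the forgetful case simply do not occur here. As in that proof, start from the tautological identities $p^*\psi_i=c_1(\sigma_i^*\omega_\pi)$ and $q^*\psi_i=c_1(\sigma_i'^*\omega_{\pi'})$ (the latter using that $(\C'\to\X,\{\sigma_i'\})$ is the pullback of the universal family from $\SM_{1,n}(m+1)$), and compare the two dualizing sheaves after pulling everything back to $\tilde\C$.

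First I would treat the $b$-side. Since $b$ is the blow-up of $\C$ along the smooth codimension-two locus $\cup_{S\in{n\brack m+1}}\cup_{i=k(S)+1}^{m+1}(\sigma_i\cap E_S)$, one has $b^*\omega_\pi=\omega_{\tilde\pi}\bigl(-\sum_{S}\sum_{i=k(S)+1}^{m+1}Z_{\{S,i\}}\bigr)$, where $Z_{\{S,i\}}$ is the exceptional divisor over $\sigma_i\cap E_S$. Restricting to the strict transform $\tilde\sigma_i\cong\X$, the class $Z_{\{S,i\}}$ restricts to $E_S$ precisely when $\{i\}$ is a singleton part of $S$ and to $0$ otherwise, so $b^*\omega_\pi|_{\tilde\sigma_i}=\omega_{\tilde\pi}|_{\tilde\sigma_i}-\sum_{\{i\}\in S}E_S$. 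As the left side equals $\sigma_i^*\omega_\pi$, taking $c_1$ gives $p^*\psi_i=\tilde\sigma_i^*\omega_{\tilde\pi}-\sum_{\{i\}\in S}E_S$.

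For the $c$-side, write $c^*\omega_{\pi'}=\omega_{\tilde\pi}(D)$ with $D$ supported on the $c$-exceptional divisors. By Proposition \ref{P:reduction}, $c$ contracts exactly the elliptic $(m+1)$-spines $\tilde E_S^1$; and here — unlike the forgetful case — no marked point is being forgotten and an $m$-stable curve has no semistable rational components relative to its $n$ markings, so there are no ``new'' rational tails to contract and the $c$-exceptional locus is precisely $\cup_S\tilde E_S^1$. Imposing that $c^*\omega_{\pi'}$ have degree zero on a contracted elliptic spine pins down the coefficient and gives $c^*\omega_{\pi'}=\omega_{\tilde\pi}\bigl(\sum_S\tilde E_S^1\bigr)$. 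Since $\tilde\sigma_i$ is disjoint from every $\tilde E_S^1$ (the sections lie on rational tails or on the branches of the fiber, never on the spine itself), restriction to $\tilde\sigma_i$ yields $q^*\psi_i=\tilde\sigma_i^*\omega_{\tilde\pi}$, with no correction term — this is exactly where the $\Delta_{\{i,n+1\}}$ appearing in Corollary \ref{C:psi-forgetful} drops out. Subtracting the two identities gives $q^*\psi_i=p^*\psi_i+\sum_{\{i\}\in S}E_S$.

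I do not expect a real obstacle: the only step meriting care is the claim that the $c$-exceptional locus is just $\cup_S\tilde E_S^1$ with the elliptic spines contracted to elliptic $(m+1)$-fold points, which is already built into Proposition \ref{P:reduction}, together with the elementary disjointness $\tilde\sigma_i\cap\tilde E_S^1=\varnothing$.
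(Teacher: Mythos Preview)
Your proposal is correct and follows exactly the approach the paper takes: the paper's own proof reads in full ``Arguing exactly as in the proof of Corollary \ref{C:psi-forgetful}, we obtain,'' and you have spelled out precisely that argument, correctly identifying the two simplifications (no $\Delta_{\{i,n+1\}}$ term since no point is forgotten, and no rational-tail contribution to the $c$-exceptional locus).
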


\section{Proof of Main Results}\label{S:Exceptional}
In this section, we prove Theorems 1, 2, and 3. Unfortunately, the statements of these theorems given in the introduction are not well-suited to the logical structure of our planned proof. (One needs Theorems 1 and 2 to prove Theorem 3, but one also needs Theorem 3 to prove Theorems 1 and 2.) To avoid a circular argument, we must introduce the following tweaked versions of Theorem 1 and 2, in which the constant $m!/24$ has been replaced by the as-yet-undetermined intersection number $\langle \psi_1^{m+1} \rangle^m$.

\begin{customthm}{$1^*$} Fix nonnegative integers $n, m$ satisfying $n>m$. The $m$-stable Witten-Kontsevich numbers satisfy the following two recursions.
\begin{enumerate}[         (a)]
\item ($m$-stable String Equation) Suppose $d_1, \ldots, d_n$ satisfy $\sum_{i=1}^{n} d_i=n+1.$ Then
\begin{align*}
\langle \prod_{i=1}^{n} \psi_i^{d_i} \cdot \psi_{n+1}^{0} \rangle^m&=\sum_{j=1}^{n}\langle \prod_{i=1}^{n}\psi_i^{d_i-\delta_{ij}} \rangle^m + \langle \psi_1^{m+1} \rangle^m \sum_{S \in {n \brack m}}(-1)^{\star(S)}\prod_{j=1}^{k(S)}{|S_j|-1 \choose \{d_i\}_{i \in S_j}}
\end{align*}
\item ($m$-stable Dilaton Equation)
Suppose $d_1, \ldots, d_n$ satisfy $\sum_{i=1}^{n} d_i=n.$ Then
\begin{align*}
\langle \prod_{i=1}^{n} \psi_i^{d_i} \cdot \psi_{n+1} \rangle^m &=n \langle\prod_{i=1}^{n}\psi_i^{d_i} \rangle^m +\langle \psi_1^{m+1} \rangle^m\sum_{S \in {n \brack m}}(-1)^{\star(S)} \prod_{j=1}^{k(S)}{|S_j|-1 \choose \{d_i\}_{i \in S_j}}
\end{align*}
where $\star(S):={n-m-k(S)-\sum_{j \in S_1 \cup \ldots \cup S_{k(S)}} d_j}-1.$
\end{enumerate}
\end{customthm}

\begin{customthm}{$2^*$} Fix nonnegative integers $n, m$ satisfying $n>m+1$. Suppose $d_1, \ldots, d_n$ satisfy $\sum d_i=n.$ Then
\begin{align*}
\langle \prod_{i=1}^{n} \psi_i^{d_i} \rangle^{m+1} &=\langle\prod_{i=1}^{n}\psi_i^{d_i} \rangle^{m} +\langle \psi_1^{m+1} \rangle^m\sum_{S \in {n \brack m+1}}(-1)^{\star(S)} \prod_{j=1}^{k(S)}{|S_j|-1 \choose \{d_i\}_{i \in S_j}}.
\end{align*}
where $\star(S):={n-m-k(S)-\sum_{j \in S_1 \cup \ldots \cup S_{k(S)}} d_j}-1.$
\end{customthm}

Now the logical structure of our argument is as follows. In Section \ref{S:mainproof}, we prove Theorems $1^*$ and $2^*$, making use of a key intersection theory computation in Section 3.2.  In Section 3.3, we use Theorems $1^*$ and $2^*$ to prove Theorem 3. Since Theorem 3 states that $\langle \psi_1^{m+1} \rangle^m=m !/24,$ Theorems 1 and 2 follow immediately.

\subsection{Proof of String/Dilaton and Reduction Recursions}\label{S:mainproof}

The strategy for proving Theorems $1^*$ and $2^*$ is straightforward: we use Corollary \ref{C:psi-forgetful} (resp. Corollary \ref{C:psi-reduction}) to compare products of $\psi$-classes on a resolution of the forgetful (resp. reduction) map. The hard part is obtaining an explicit formula for the contributions arising from the exceptional divisors, and this calculation is carried out in Section \ref{S:intersection}.

\subsubsection{$m$-stable String equation}\label{S:string}

To prove Theorem $1^*\,(a)$, we consider the resolution of the forgetful map from Section \ref{S:forgetful}:

\[
\xymatrix{ 
&\X  \ar[ld]_{p} \ar[rd]^{q} &\\
\SM_{1,n+1}(m)  \ar@{-->}[rr]&&\SM_{1,n}(m)\\
}
\]

Given nonnegative integers $d_1, \ldots, d_n$ satisfying $\sum_{i=1}^{n} d_i=n+1$, Corollary \ref{C:psi-reduction} gives the following equation in $A_{\Q}^{n+1}(\X)$:
\begin{align*}
(q^*\psi_1+\Delta_{\{1,n+1\}})^{d_1}\cdots (q^*\psi_n+\Delta_{\{n,n+1\}})^{d_n}=(p^*\psi_1+\sum_{\substack{S \in {n \brack m}\\ \{1\} \in S }}E_{S})^{d_1}\cdots (p^*\psi_n+\sum_{\substack{S \in {n \brack m}\\ \{n\} \in S }}E_{S})^{d_n}.\\
\end{align*}

First, we show that the degree of the lefthand side of the equation is $\sum_{j=1}^{n}\langle \prod_{i=1}^{n}\psi_i^{d_i-\delta_{ij}} \rangle^m.$ This is just the proof of the original string equation, but we recall the argument for the convenience of the reader. We have $(q^*\psi_1)^{d_1}(q^*\psi_2)^{d_2} \cdots (q^*\psi_n)^{d_n}=0$ since $\dim \SM_{1,n}(m)=n$. Next, since the divisors $\{\Delta_{\{i,n+1\}}\}_{i=1}^{n}$ are disjoint, we can expand the lefthand side as
\begin{align*}
\sum_{i=1}^{n}\left( \sum_{j=1}^{d_i}{d_i \choose j} (q^*\psi_i)^{d_i-j}\Delta_{\{i,n+1\}}^{j} \right) \prod_{k \neq i}(q^*\psi_k)^{d_k}.
\end{align*}

We can evaluate this sum as an intersection product on $\SM_{1,n}(m)$ by using the natural identifications:
\begin{align*}
\Delta_{\{i,n+1\}} &\simeq \SM_{1,n}(m),\\
\Delta_{\{i,n+1\}}|_{\Delta_{\{i,n+1\}}} &\simeq -\psi_i,\\
q^*\psi_j|_{\Delta_{\{i,n+1\}}} &\simeq \psi_j.\\
\end{align*}
Using the fact that $\sum_{j=1}^{d_i}(-1)^{j-1}{d_i \choose j}=1$, we obtain
\begin{align*}
\deg LHS&=\deg \sum_{i=1}^{n}\left( \sum_{j=1}^{d_i}{d_i \choose j} (q^*\psi_i)^{d_i-j}|_{\Delta_{\{i,n+1\}}} \Delta_{\{i,n+1\}}^{j-1}|_{\Delta_{\{i,n+1\}}}  \right) \prod_{k \neq i}(q^*\psi_k)^{d_k}|_{\Delta_{\{i,n+1\}}} .\\
&=\deg_{\SM_{1,n}(m)}  \sum_{i=1}^{n}\left( \sum_{j=1}^{d_i}(-1)^{j-1}{d_i \choose j} \psi_i^{d_i-1} \right) \prod_{k \neq i}\psi_k^{d_k}\\
&=\deg_{\SM_{1,n}(m)}   \sum_{i=1}^{n} (\psi_i)^{d_i-1} \prod_{k \neq i} \psi_k^{d_k}.\\
&=\sum_{j=1}^{n} \langle \prod_{i=1}^{n}\psi_i^{d_i-\delta_{ij}} \rangle^m
\end{align*}

Next, we evaluate the degree of the righthand side. By the push-pull formula, $$\deg \,(p^*\psi_1)^{d_1}(p^*\psi_2)^{d_2} \cdots (p^*\psi_n)^{d_n} = \langle \psi_1^{d_1} \ldots \psi_n^{d_n} \rangle^m.$$ Since the exceptional divisors are disjoint, we can then write the degree of the righthand side as
\begin{align*}
\deg RHS =\langle \psi_1^{d_1}\ldots \psi_n^{d_n} \rangle^{m}+\sum_{S \in {n \brack m}} \deg Z_{S},
\end{align*}
where $Z_{S} \in A^*(E_S)$ is the class determined by the sum of all terms divisible by $E_{S}$ (and only by $E_{S}$). Explicitly, if $S=\{S_1, \ldots, S_k, \{i_1\}, \ldots, \{i_{m-k}\}\},$
then
$$
Z_{S}=\prod_{i \in S_1 \cup \ldots \cup S_k}(p^*\psi_i)^{d_i} \cdot \frac{(p^*\psi_{i_1}+E_S)^{d_{i_1}} \cdots (p^*\psi_{i_{m-k}}+E_S)^{d_{i_{m-k}}}-(p^*\psi_{i_1})^{d_{i_1}}\cdots(p^*\psi_{i_{m-k}})^{d_{i_{m-k}}}}{E_{S}}\Bigg|_{E_S}.
$$

To complete the proof Theorem $1^*$(a), it now suffices to show that
$$
\deg Z_{S}=(-1)^{\star(S)+1}\,\langle\psi_1^{m+1} \rangle^m\, \prod_{j=1}^{k}{|S_j|-1 \choose \{d_i\}_{i \in S_j}}.
$$

We claim that this is precisely the content of Proposition \ref{P:intersection}(b) in Section \ref{S:intersection}. To see this, first observe that that $E_{S}$ is isomorphic to the projective bundle $Y$ appearing in the statement of Proposition \ref{P:intersection}. Indeed, we have
\begin{align*}
\Delta_{S}:&=\Delta_{0,S_1} \cap \ldots \cap \Delta_{0,S_k}\\
 &\simeq \SM_{1,m+1}(m) \times \SM_{0,|S_1|+1} \times \ldots \times \SM_{0,|S_k|+1},\\
E_{S}:&=\P(N),
\end{align*}
where $N$ is the normal bundle of $\Delta_{S} \subset \SM_{1,n}(m)$. Since $\Delta_{S}$ is a global complete intersection, we have 
$$
N=\oplus_{i=1}^{k} \O(\Delta_{0,S_i})|_{\Delta_S},
$$
and by the standard identification of the deformation space of a node with the tensor product of the tangent spaces of its branches, $\O(\Delta_{0,S_i})|_{\Delta_S}=T_i \oplus T_i',$ where $T_i$ (resp. $T_i'$) is the pull-back of the tangent bundle of the $i^{th}$ section over $\SM_{1,m+1}(m)$ (resp. $(|S_i|+1)^{st}$ section over $\SM_{0,|S_i|+1}$). Thus, $N$ is precisely the bundle appearing in the definition of $Y$ in Section \ref{S:intersection}.

In terms of the presentation of $A^*(Y)$ described Section \ref{S:intersection}, the classes appearing in the definition of $Z_{S}$ are simply
\begin{align*}
E_{S}|_{E_S}&=\eta, \\
p^*\psi_{i_j}|_{E_S}&=x_0, \text{ $j=1,2 \ldots, m-k,$}\\
p^*\psi_{j}|_{E_S}&=\psi_j, \text{ $j \in S_1 \cup \ldots \cup S_k.$}\\
\end{align*}
Thus, we have
\begin{align*}
Z_{S}&= \frac{(p^*\psi_{i_1}+E_S)^{d_{i_1}} \cdots (p^*\psi_{i_{m-k}}+E_S)^{d_{i_{m-k}}}-(p^*\psi_{i_1})^{d_{i_1}}\cdots(p^*\psi_{i_{m-k}})^{d_{i_{m-k}}}}{E_{S}}\Big|_{E_S}\cdot \prod_{i \in S_1 \cup \ldots \cup S_k}(p^*\psi_i)^{d_i}\Big|_{E_S} \\
&=\frac{(x_0+\eta)^{d_{i_1}} \cdots (x_0+\eta)^{d_{i_{m-k}}}-(x_0)^{d_{i_1}}\cdots(x_0)^{d_{i_{m-k}}}}{\eta}\, \cdot \prod_{i \in S_1 \cup \ldots \cup S_k}\psi_i^{d_i}  \in A^{n}(Y)\\
&=\frac{(x_0+\eta)^d-x_0^d}{\eta}\, \cdot \prod_{i \in S_1 \cup \ldots \cup S_k}\psi_i^{d_i}  \in A^{n}(Y),\\
\end{align*}
where $d=d_{i_1}+\ldots+d_{i_{m-k}}=(n+1)-\sum_{i \in S_1 \cup \ldots \cup S_k}d_i$. Now Proposition \ref{P:intersection}(b) asserts
$$
\deg Z_{S}=(-1)^{\star(S)+1}\,\langle\psi_1^{m+1} \rangle^m\, \prod_{j=1}^{k}{|S_j|-1 \choose \{d_i\}_{i \in S_j}},
$$
as desired.\\

\subsubsection{$m$-stable Dilaton equation}\label{S:dilaton}

To prove Theorem $1^*\,$(b), we again consider the resolution of the forgetful map from Section \ref{S:forgetful}:

\[
\xymatrix{ 
&\X  \ar[ld]_{p} \ar[rd]^{q} &\\
\SM_{1,n+1}(m) \ar@{-->}[rr]&&\SM_{1,n}(m)\\
}
\]
Given non-negative integers $d_1, \ldots, d_n$ satisfying $\sum_{i=1}^{n} d_i=n$, Corollary \ref{C:psi-reduction} gives the following equation in $A_{\Q}^{n+1}(\X)$:
\begin{align*}
(q^*\psi_1+\Delta_{\{1,n+1\}})^{d_1}\cdots (q^*\psi_n+\Delta_{\{n,n+1\}})^{d_n} \cdot p^*\psi_{n+1}=(p^*\psi_1+\sum_{\substack{S \in {n \brack m}\\ \{1\} \in S }}E_{S})^{d_1}\cdots (p^*\psi_n+\sum_{\substack{S \in {n \brack m}\\ \{n\} \in S }}E_{S})^{d_n} \cdot p^*\psi_{n+1}.\\
\end{align*}

First, we show that the degree of the lefthand side is $n \langle \psi_1^{d_1} \ldots \psi_n^{d_n} \rangle$.  This is just the proof of the original dilaton equation, but we recall the argument for the convenience of the reader. First, observe that since $p^{*}\psi_{n+1}|_{\Delta_{\{i,n+1\}}}=0,$ all terms on the lefthand side are zero except the leading term. To see that
$$
 \deg \,(q^*\psi_1)^{d_1}(q^*\psi_2)^{d_2} \cdots (q^*\psi_n)^{d_n} \cdot p^*\psi_{n+1} = n \langle \psi_1^{d_1} \ldots \psi_n^{d_n} \rangle^m,
$$
it suffices to see that $q_*\left(p^*\psi_{n+1}\right)=n\left[\,\SM_{1,n}(m)\right]$. This follows by a standard test-curve argument. Let $T \subset \SM_{1,n+1}(m)$ be the curve obtained by taking a fixed $n$-pointed, smooth elliptic curve, and letting the $(n+1)^{st}$ marked point vary along the curve (and blowing up when the $(n+1)^{st}$ point collides with the other marked points). Then $T$ is a contracted curve which avoids the indeterminacy locus of $\SM_{1,n+1}(m) \dashrightarrow \SM_{1,n}(m)$, and $\deg \psi_{n+1}|_{T}=n$.

Next, we evaluate the degree of the righthand side. By the push-pull formula, the degree of the leading term is precisely $\langle \psi_1^{d_1} \ldots \psi_n^{d_n} \psi_{n+1} \rangle^{m}. $ Since the exceptional divisors are disjoint, we can then write the degree of the the righthand side as
\begin{align*}
\langle \psi_1^{d_1}\ldots \psi_n^{d_n} \rangle^{m}+\sum_{S \in {n \brack m}} \deg Z_{S}.
\end{align*}
where $Z_{S} \in A^*(E_S)$ is the class determined by the sum of all terms divisible by $E_{S}$ (and only by $E_{S}$). Arguing precisely as in the proof of the string equation (\ref{S:string} above), we see that $E_{S}$ is isomorphic to the projective bundle $Y$ defined in Section \ref{S:intersection}, and that in terms of the presentation of $A^*(Y)$ given there, we have
$$
Z_{S}=x_0 \cdot \frac{(x_0+\eta)^d-x_0^d}{\eta} \cdot \prod_{i \in {S_1 \cup \ldots \cup S_k}}\psi_i^{d_i} \in A^{n}(Y),
$$
where $d=n-\sum_{i \in S_{1} \cup \ldots \cup S_k}d_i$.
Thus, Proposition \ref{P:intersection}(c) says
$$
\deg Z_{S}=(-1)^{\star(S)+1}\,\langle \psi_1^{m+1} \rangle^m \, \prod_{j=1}^{k}{|S_j|-1 \choose \{d_i\}_{i \in S_j}}.
$$
Equating degrees of lefthand and righthand sides gives
$$
\langle \psi_1^{d_1}\ldots \psi_n^{d_n} \psi_{n+1} \rangle^{m}= n \langle \psi_1^{d_1} \ldots \psi_n^{d_n} \rangle^m+\sum_{S \in {n \brack m}}(-1)^{\star(S)}\,\langle\psi_1^{m+1} \rangle^m\, \prod_{j=1}^{k}{|S_j|-1 \choose \{d_i\}_{i \in S_j}},
$$
as desired.

\subsubsection{Reduction recursion}
To prove Theorem $2^*$, we consider the resolution of the reduction map from Section \ref{S:reduction}:

\[
\xymatrix{ 
&\X  \ar[ld]_{p} \ar[rd]^{q} &\\
\SM_{1,n}(m) \ar@{-->}[rr]&&\SM_{1,n}(m+1)\\
}
\]
Given non-negative integers $d_1, \ldots, d_n$ satisfying $\sum_i d_i=n$, Corollary \ref{C:psi-reduction} gives the following equality in $A_{\Q}^{n}(\X)$:
\begin{align*}
(q^*\psi_1)^{d_1}(q^*\psi_2)^{d_2} \cdots (q^*\psi_n)^{d_n}=(p^*\psi_1+\sum_{\substack{S \in {n \brack m+1}\\ \{1\} \in S }}E_{S})^{d_1}\cdots (p^*\psi_n+\sum_{\substack{S \in {n \brack m+1}\\ \{n\} \in S }}E_{S})^{d_n}\\
\end{align*}

The degree of the left side of this equation is $\langle \psi_1^{d_1} \ldots \psi_n^{d_n} \rangle^{m+1},$ and the degree of the leading term of the right side is $\langle \psi_1^{d_1} \ldots \psi_n^{d_n} \rangle^{m}.$  Since the exceptional divisors are disjoint, we then have
\begin{align*}
\langle \psi_1^{d_1} \ldots \psi_n^{d_n} \rangle^{m+1}=\langle \psi_1^{d_1}\ldots \psi_n^{d_n} \rangle^{m}+\sum_{S \in {n \brack m+1}}\deg Z_{S},
\end{align*}
where $Z_{S} \in A^*(E_S)$ is the class determined by the sum of all terms divisible by $E_{S}$ (and only by $E_{S}$).

Just as in the proofs of the string/dilaton equations (\ref{S:string} and \ref{S:dilaton} above), $E_{S}$ is isomorphic to the projective bundle $Y$ of Section 3.2, and in terms of the presentation of $A^*(Y)$ given there, we have
\begin{align*}
Z_{S}:&=\frac{(x_0+\eta)^{d}-x_0^d}{\eta} \prod_{i \in S_1 \cup \ldots \cup S_k}\psi_i^{d_i} \in A^{n-1}(Y),\\
\end{align*}
where $d=n-\sum_{i \in S_1 \cup \ldots \cup S_k} d_i$. By Proposition \ref{P:intersection} (a), we have 
$$
\deg Z_{S}=(-1)^{\star(S)}\,\langle \psi_1^{m+1} \rangle^{m}\, \prod_{j=1}^{k}{|S_j|-1 \choose \{d_i\}_{i \in S_j}},
$$
and the result follows.

\subsection{Key Intersection Theory Calculation}\label{S:intersection}
Let $S_1, \ldots, S_k$ be nonempty, disjoint subsets of $[n]$ satisfying $|S_i| \geq 2$, and consider the stack
$$
X:=\SM_{1,m+1}(m) \times \SM_{0,|S_1|+1} \times  \SM_{0,|S_2|+1} \times \ldots \times \SM_{0,|S_k|+1}.
$$
We consider the first $|S_i|$ sections of  $\SM_{0,|S_i|+1}$ as labeled by the elements of $S_i$, and define $\{\psi_j \in A^1(\SM_{0,|S_i|+1}): j \in S_i\}$ as the chern classes of the corresponding cotangent bundles. For each $i \in \{1, \ldots, k\},$ we define $x_i \in A^1(\SM_{0,|S_i|+1})$ as the chern class of the cotangent bundle of the $(|S_i|+1)^{st}$ section. Finally, we define $x_0 \in A^1(\SM_{1,m+1}(m))$ as the chern class of the cotangent bundle of any of the $m+1$ sections over $\SM_{1,m+1}(m)$ (the cotangent bundles of the different sections are all linearly equivalent by Proposition 3.2 in \cite{smyth_elliptic2}).  We also consider $x_0, x_1, \ldots, x_k, \{\psi_{j} : j \in S_1 \cup \ldots \cup S_k\}$ as classes in $A^1(X)$ via pullback.

For each $i \in \{1, \ldots, k\}$, let $T_i \in \Pic(X)$ denote the pullback of the tangent bundle of the $i^{th}$-section over $\SM_{1,m+1}(m)$, and let $T_i' \in \Pic(X)$ denote the pullback of the tangent bundle of the $(|S_i|+1)^{st}$ section over $\SM_{0,|S_i|+1}.$ Set
$$
N:=\oplus_{i=1}^{k}T_i \oplus T_i',
$$
and observe that the chern classes of $N$ are given by
\begin{align*}
c_i(N)&=s_i(-x_0-x_1, -x_0-x_2, \ldots, -x_0-x_k),\\
&=(-1)^{i}s_i(x_0+x_1, x_0+x_2, \ldots, x_0+x_k).
\end{align*}
where $s_i$ is the $i^{th}$ elementary symmetric function in $k$ variables.

Now we define $Y:=\P(N)$ to be the projectivization of $N$ over $X$. We have
\begin{align*}
A^*(Y):&=A^*(X)[\eta]/(\eta^k-c_1(N)\eta^{k-1}+c_2(N)\eta^{k-2}-\ldots+(-1)^k c_k(N)),\\
&=A^*(X)[\eta]/(\eta^k+s_1\eta^{k-1}+s_2\eta^{k-2}+\ldots+s_k),
\end{align*}
where $\eta=c_1(\O_{\P}(-1)) \in A^1(Y)$, and $s_i:=s_i(x_0+x_1, x_0+x_2, \ldots, x_0+x_k) \in A^i(X)$. 

The proofs of Theorems $1^*$(a), $1^*$(b), and $2^*$ each require computing the degree of a certain class on $Y$. The necessary results are stated in the following proposition.
\begin{proposition}\label{P:intersection}
\begin{enumerate}
\item[]
\item[(a)] Suppose that $\sum_{i=1}^{k}|S_i|=n-m+k-1$, so $\dim Y=n-1$. 
Let $d_1, \ldots, d_n$ be a collection of non-negative integers such that $\sum_{i=1}^{n} d_i=n$, and let 
$$d:=\sum_{i \in [n] / S_1 \cup \ldots \cup S_k} d_i=n-\sum_{i \in S_1 \cup \ldots \cup S_k}d_i.$$
Let $Z$ be the following class on $Y$:
$$Z:=\frac{(x_0+\eta)^d-x_0^d}{\eta} \cdot \prod_{i \in {S_1 \cup \ldots \cup S_k}}\psi_i^{d_i} \in A^{n-1}(Y).$$
Then $$\deg Z=(-1)^{\star} \langle \psi_1^{m+1} \rangle^m  \prod_{j=1}^{k}{|S_j|-1 \choose \{d_i\}_{i \in S_j}}.$$
where $\star=n-m-k-1-\sum_{j \in S_1 \cup \ldots \cup S_k} d_j$.\\

\item[(b)] Suppose that $\sum_{i=1}^{k}|S_i|=n-m+k$, so $\dim Y=n$. 
Let $d_1, \ldots, d_n$ be a collection of non-negative integers such that $\sum_{i=1}^{n} d_i=n+1$, and let 
$$d:=\sum_{i \in [n] / S_1 \cup \ldots \cup S_k} d_i = (n+1)-\sum_{i \in S_1 \cup \ldots \cup S_k}d_i.$$
Let $Z$ be the following class on $Y$:
$$Z:=\frac{(x_0+\eta)^d-x_0^d}{\eta} \cdot \prod_{i \in {S_1 \cup \ldots \cup S_k}}\psi_i^{d_i} \in A^{n}(Y).$$
Then $$\deg Z=(-1)^{\star+1} \langle \psi_1^{m+1} \rangle^m  \prod_{j=1}^{k}{|S_j|-1 \choose \{d_i\}_{i \in S_j}},$$
where $\star=n-m-k-1-\sum_{j \in S_1 \cup \ldots \cup S_k} d_j$.\\

\item[(c)] Suppose that $\sum_{i=1}^{k}|S_i|=n-m+k$, so $\dim Y=n$. 
Let $d_1, \ldots, d_n$ be a collection of non-negative integers such that $\sum_{i=1}^{n} d_i=n$, and let 
$$d:=\sum_{i \in [n] / S_1 \cup \ldots \cup S_k} d_i=n-\sum_{i \in S_1 \cup \ldots \cup S_k}d_i.$$
Let $Z$ be the following class on $Y$:
$$Z:=x_0 \cdot \frac{(x_0+\eta)^d-x_0^d}{\eta} \cdot \prod_{i \in {S_1 \cup \ldots \cup S_k}}\psi_i^{d_i} \in A^{n}(Y),$$
Then $$\deg Z=(-1)^{\star+1} \langle \psi_1^{m+1} \rangle^m  \prod_{j=1}^{k}{|S_j|-1 \choose \{d_i\}_{i \in S_j}},$$
where $\star=n-m-k-1-\sum_{j \in S_1 \cup \ldots \cup S_k} d_j$.\\

\end{enumerate}
\end{proposition}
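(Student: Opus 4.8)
The plan is to treat the three parts uniformly. In each case $\deg Z$ is the degree of an explicit class on $Y=\mathbb{P}(N)$ built from $\eta$, $x_0$, and the $\psi_i$ with $i\in S_1\cup\cdots\cup S_k$, and I would compute it by pushing $Z$ forward along $\rho\colon Y\to X$ and then integrating over the product $X=\SM_{1,m+1}(m)\times\prod_{l=1}^{k}\SM_{0,|S_l|+1}$. The first step is to rewrite the difference quotient as an honest polynomial, $\tfrac{(x_0+\eta)^{d}-x_0^{d}}{\eta}=\sum_{a=0}^{d-1}(x_0+\eta)^{a}x_0^{d-1-a}$. The crucial simplification is the standard twist $\mathbb{P}(N)\cong\mathbb{P}(N\otimes L)$ with $c_1(L)=x_0$: since the Chern roots of $N$ are $-(x_0+x_l)$, those of $N':=N\otimes L$ are $-x_l$, and $\eta':=\eta+x_0=c_1(\mathcal{O}_{\mathbb{P}(N')}(-1))$ obeys the cleaner relation $\prod_{l=1}^{k}(\eta'+x_l)=0$, i.e. $(\eta')^{k}+\sigma_1(\eta')^{k-1}+\cdots+\sigma_k=0$ with $\sigma_i=e_i(x_1,\dots,x_k)$. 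In the variable $\eta'$ the class becomes $Z=\bigl(\sum_{a=0}^{d-1}(\eta')^{a}x_0^{d-1-a}\bigr)\prod_{i}\psi_i^{d_i}$ (with an extra factor of $x_0$ in case (c)), so $x_0$ now appears only through the explicit powers $x_0^{d-1-a}$.

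Next I would apply the pushforward formula for a projective bundle: $\rho_*\bigl((\eta')^{a}\bigr)=0$ for $a<k-1$, and $\rho_*\bigl((\eta')^{k-1+j}\bigr)=\pm\, h_j(x_1,\dots,x_k)$ for $j\ge 0$, with an explicit sign (immediate from the Segre-class formula, or from the residue/localization formula since $N'$ splits into line bundles). This expresses $\rho_*Z$ as an honest polynomial in $x_0,x_1,\dots,x_k$ and the $\psi_i$. Because $X$ is a product and $x_0$ is pulled back only from the elliptic factor, $\deg_X(\rho_*Z)$ factors into a product of integrals, one over $\SM_{1,m+1}(m)$ and one over each $\SM_{0,|S_l|+1}$.

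A dimension count then does most of the remaining work. Since $\dim\SM_{1,m+1}(m)=m+1$, only the term with $x_0^{m+1}$ survives, which forces $a=d-m-2$ (one value; in case (c) the extra $x_0$ shifts this by one). On each $\SM_{0,|S_l|+1}$, of dimension $|S_l|-2$, only the monomial $x_l^{e_l}$ of $h_{a-k+1}$ survives, where $e_l:=|S_l|-2-\sum_{i\in S_l}d_i$; one checks, using the numerical hypothesis of the relevant case, that $\sum_l e_l=a-k+1$, so that this monomial genuinely occurs, and that otherwise some $e_l<0$ and both sides of the asserted identity vanish. The two surviving factors are then evaluated: $\int_{\SM_{1,m+1}(m)}x_0^{m+1}=\langle\psi_1^{m+1}\rangle^{m}$ (the $m+1$ sections all have the same cotangent class, by \cite{smyth_elliptic2}), and $\int_{\SM_{0,|S_l|+1}}x_l^{e_l}\prod_{i\in S_l}\psi_i^{d_i}$ is the genus-zero descendant integral $\langle\tau_{e_l}\prod_{i\in S_l}\tau_{d_i}\rangle_0$, a multinomial coefficient.

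The main obstacle is the final bookkeeping: one must chase the sign produced by $\rho_*$ and by the exponent shift through the three slightly different numerologies of (a), (b), (c), and then massage the product of genus-zero descendant integrals into exactly $\prod_{j=1}^{k}\binom{|S_j|-1}{\{d_i\}_{i\in S_j}}$ together with the correct overall sign $(-1)^{\star(S)}$ (resp. $(-1)^{\star(S)+1}$). This is the "elaborate calculation in the Chow ring" referred to in the introduction, and the only subtle point beyond it is verifying that the degenerate ranges of $(d_1,\dots,d_n)$ — those for which the forced value of $a$ falls outside $[k-1,d-1]$, or for which some $e_l$ is negative — are precisely the ones on which $\prod_{j}\binom{|S_j|-1}{\{d_i\}_{i\in S_j}}=0$.
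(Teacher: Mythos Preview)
Your proposal is correct and takes a genuinely different, and in fact cleaner, route than the paper.

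The paper works directly with $\eta$ and the relation $\prod_{l=1}^{k}(\eta+x_0+x_l)=0$. It expands $((x_0+\eta)^d-x_0^d)/\eta$ in the basis $1,\eta,\ldots,\eta^{k-1}$ and must then extract the coefficient of $x_0^{m+1}x_1^{e_1}\cdots x_k^{e_k}$ from the leading coefficient $q_0$. Because $x_0$ is entangled with the $x_l$ in every $s_i$, this extraction requires three separate combinatorial lemmas: one identifying $q_{d,0}$ with $(-1)^d p_d(x_0+x_1,\ldots,x_0+x_k)$, one computing the coefficient of $x_0^{m}x_1^{e_1}\cdots x_k^{e_k}$ in $p_{m+\sum e_i}(x_0+x_1,\ldots,x_0+x_k)$ via a stars-and-bars argument, and one evaluating an alternating binomial sum $\sum_j(-1)^j\binom{d}{j}\binom{d-1-j}{m-j}$.

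Your twist $\eta'=\eta+x_0$ is exactly what dissolves this difficulty. After the twist the Chern roots of $N'$ are $-x_l$ (no $x_0$), so $\rho_*(\eta')^{k-1+j}$ is, up to a sign, the complete homogeneous polynomial $h_j(x_1,\ldots,x_k)$; the coefficient of $x_1^{e_1}\cdots x_k^{e_k}$ in $h_j$ is simply $1$ whenever $\sum e_l=j$, and the role of $x_0$ is isolated in the explicit factor $x_0^{d-1-a}$. The three lemmas are thereby replaced by a single observation. What the paper's approach buys is that it stays entirely inside the Chow ring presentation as written, without invoking the projective-bundle/Segre pushforward formalism; what your approach buys is a dramatically shorter computation. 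The residual ``bookkeeping'' you flag---tracking the sign $(-1)^a$ from $\rho_*(\eta')^a$ against the target exponent $\star$, and matching the genus-zero multinomial $\binom{|S_l|-2}{e_l,\{d_i\}}$ to the form stated in the Proposition---is routine and is the only place where the three cases differ.
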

\begin{proof}
The proofs of (a), (b), and (c) are essentially identical, so we just prove (a). The idea is to use the fundamental relation in $A^{*}(Y)$ to rewrite $((x_0+\eta)^d-x_0^d)/\eta$ as a polynomial of degree $(k-1)$ in $\eta$, i.e. to write
  $$
\frac{(x_0+\eta)^d-x_0^d}{\eta} =q_{0}\eta^{k-1}+q_1\eta^{k-2}+\ldots +q_{k-1},
$$
for some $q_i \in A^{*}(X)$. Evidently, the classes $q_i$ will be polynomials in $x_0, x_1, \ldots, x_k,$ and we will have
$$
\deg Z=
\deg q_0(x_0, \ldots, x_k)\prod_{i \in {S_1 \cup \ldots \cup S_k}}\psi_i^{d_i} \in A^{n-k}(X).\\
$$

Furthermore, since $X \simeq \SM_{m+1}(m) \times \SM_{0,|S_1|+1} \times \ldots \times \SM_{0,|S_k|+1},$ a monomial of the form
$$
x_0^{c_0}x_1^{c_1} \ldots x_k^{c_k}\prod_{i \in S_1 \cup \ldots \cup S_k}\psi_i^{d_i}
$$
can only give a nonzero class in $A^{n-k}(X)$ if it contains precisely $m+1$ classes pulled back from $\SM_{1,m+1}(m)$, and $|S_i|-2$ classes pulled back from $\SM_{0,|S_i|+1}$ (for each $i=1, \ldots, k$). In other words, if we define the \emph{deficiencies} to be the integers
$$
e_i=|S_i|-2-\sum_{j \in S_i} d_j,\,\,\,\, i=1, \ldots, k,
$$
then this monomial contributes to the degree of $Z$ only if $c_0=m+1$ and $c_i=e_i$ for $i=1, \ldots, k$. Finally, if these equalities do hold, then the formula \footnote{This closed formula for Witten-Kontsevich numbers on $\M_{0,n}$ is an elementary consequence of the string equation.}
\begin{align*}
\langle \psi_1^{d_1} \ldots \psi_n^{d_n} \rangle_{0,n}&= {n-3 \choose d_1, \ldots, d_n }\\ 
\end{align*}
implies that
$$ 
\deg x_0^{m+1}x_1^{e_1}\ldots x_k^{e_k}  \cdot \prod_{i \in {S_1 \cup \ldots \cup S_k}}\psi_i^{d_i} =\langle \psi_1^{m+1} \rangle^m \prod_{j=1}^{k}{|S_j|-2 \choose \{d_i\}_{i \in S_j}}.
$$
Thus, to prove statement (a) of the Proposition, it only remains to show that when we reduce $((x_0+\eta)^d-x_0^d)/\eta$ to a polynomial of degree $(k-1)$ in $\eta$, the coefficient of $\eta^{k-1}$ contains the monomial $x_0^{m+1}x_1^{e_1}\ldots x_k^{e_k}$ with coefficient precisely $(-1)^{\star}$.

To do this, we use three combinatorial lemmas (proved below). Lemma \ref{L:reduce1} implies that when we reduce
$$\eta^{d-1}+ {d \choose 1}x_0\eta^{d-2}+{d \choose 2}x_0^2\eta^{d-3}+\ldots+{d \choose d-1}x_0^{d-1}$$
 the coefficient of $\eta^{k-1}$ is exactly $(-1)^{d-k}$ times the alternating sum
\begin{multline*}
p_{d-k}(x_0+x_1, \ldots, x_0+x_k)-{ d \choose 1}x_0p_{d-k-1}(x_0+x_1, \ldots, x_0+x_k)+\ldots\\-{d \choose d-k-1} x_0^{d-k-1}p_{1}(x_0+x_1, \ldots, x_0+x_k) + {d \choose d-k}x_0^{d-k}.
\end{multline*}

Lemma \ref{L:reduce2} implies that the coefficient of $x_0^{m+1}x_1^{e_1}\ldots x_k^{e_k}$ in the term-by-term expansion of this polynomial is $(-1)^{d-k}$ times the alternating sum
$$
{d-1 \choose m+1}-{ d \choose 1}{d-2 \choose m}+{ d \choose 2}{d-3 \choose m-1}-\ldots \pm {d \choose m+1}{d-m-2 \choose 0}.
$$
Finally, Lemma \ref{L:reduce3} shows that this alternating sum of binomial coefficients is just $(-1)^{m+1}$. Thus, we find that  $x_0^{m+1}x_1^{e_1}\ldots x_k^{e_k}$ appears with coefficient $(-1)^{d-k} \cdot (-1)^{m+1}=(-1)^{d-k-m-1}=(-1)^{\star}$ as desired. \\

\end{proof}

\begin{lemma}\label{L:reduce1}
When we expand $\eta^{d+k-1} \in A^{*}(Y)$ in terms of the basis $\eta^{k-1}, \eta^{k-2}, \ldots, \eta, 1$, the coefficient of $\eta^{k-1}$ is precisely $(-1)^{d}p_d(x_0+x_1, \ldots, x_0+x_k),$ where $p_d$ is the sum of all degree $d$ monomials in $x_1, \ldots, x_k$, i.e. $$p_d(x_1, \ldots, x_k):=\sum_{1 \leq i_1 \leq i_2 \leq  \ldots  \leq i_d \leq k}^{k}x_{i_1}x_{i_2}\ldots x_{i_d},$$
\end{lemma}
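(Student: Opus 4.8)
The plan is to reduce the extraction of the coefficient of $\eta^{k-1}$ to a one-variable linear recurrence governed by the fundamental relation in $A^*(Y)$, and then to solve that recurrence with a generating function. Write $y_j := x_0 + x_j$ for $j = 1, \ldots, k$, so that the elementary symmetric functions occurring in the presentation of $A^*(Y)$ are $s_i = s_i(x_0 + x_1, \ldots, x_0 + x_k) = e_i(y_1, \ldots, y_k)$, and the fundamental relation reads $\eta^k = -(s_1 \eta^{k-1} + s_2 \eta^{k-2} + \cdots + s_k)$. Since $1, \eta, \ldots, \eta^{k-1}$ is an $A^*(X)$-basis of $A^*(Y)$, there is a well-defined $A^*(X)$-linear map $\lambda \colon A^*(Y) \to A^*(X)$ extracting the coefficient of $\eta^{k-1}$, and the lemma is precisely the assertion that $\lambda(\eta^{d+k-1}) = (-1)^d\, p_d(y_1, \ldots, y_k)$.

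First I would establish the recurrence. For $N \ge k$, multiplying the fundamental relation by $\eta^{N-k}$ gives the identity $\eta^N = -(s_1 \eta^{N-1} + \cdots + s_k \eta^{N-k})$ in $A^*(Y)$; applying $\lambda$ term by term yields $F(N) = -\sum_{i=1}^{k} s_i\, F(N-i)$, where $F(N) := \lambda(\eta^N)$. The point worth stressing is that this passage is legitimate: the relation holds in $A^*(Y)$ before any reduction to the basis, so no nested reductions of the intermediate powers $\eta^{N-i}$ are required and the $A^*(X)$-linearity of $\lambda$ does the rest. The initial values follow from $\lambda(\eta^i) = \delta_{i,k-1}$ for $0 \le i \le k-1$, namely $F(0) = \cdots = F(k-2) = 0$ and $F(k-1) = 1$.

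Next I would solve the recurrence. Set $\Phi(z) := \sum_{N \ge 0} F(N)\, z^N \in A^*(X)[[z]]$ and $E(z) := \sum_{i=0}^{k} s_i z^i = \prod_{j=1}^{k}(1 + y_j z)$, with $s_0 = 1$. The recurrence together with the initial conditions is equivalent to $E(z)\,\Phi(z) = z^{k-1}$, since the coefficient of $z^M$ on the left is $\sum_{i} s_i F(M-i)$ (with $F$ vanishing on negative arguments), which is $0$ for $M \ge k$ by the recurrence and equals $\delta_{M, k-1}$ for $0 \le M \le k-1$. As $E(z)$ has constant term $1$ it is invertible in $A^*(X)[[z]]$, and the standard identity $\bigl(\prod_{j=1}^{k}(1 + y_j z)\bigr)^{-1} = \sum_{m \ge 0} (-1)^m p_m(y_1, \ldots, y_k)\, z^m$ (the relation $E(z) H(-z) = 1$ between the elementary and complete homogeneous symmetric series, valid after specialization to any commutative ring) gives $\Phi(z) = z^{k-1}\sum_{m \ge 0} (-1)^m p_m(y)\, z^m = \sum_{m \ge 0} (-1)^m p_m(y)\, z^{m+k-1}$. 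Comparing the coefficients of $z^{d+k-1}$ produces $F(d+k-1) = (-1)^d p_d(y_1, \ldots, y_k)$, which is the claim.

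I do not expect a genuine obstacle here. The only subtlety is the observation in the second step: one should resist trying to prove the formula by inducting on $N$ while tracking only the top coefficient $\lambda(\eta^N)$ through the division algorithm, since a single reduction of $\eta^N$ mixes in the coefficient of $\eta^{k-2}$ and the induction does not close. Working instead with the unreduced identity $\eta^N = -\sum_i s_i \eta^{N-i}$ and applying the linear functional $\lambda$ sidesteps this entirely, after which everything is routine symmetric-function bookkeeping.
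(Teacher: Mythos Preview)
Your proof is correct and takes a genuinely different route from the paper. The paper tracks \emph{all} coefficients $q_{d,i}$ in the expansion $\eta^{d+k-1} = \sum_i q_{d,i}\,\eta^{k-1-i}$, establishes the two-term recursion $q_{d,i} = q_{d-1,i+1} - s_{i+1}q_{d-1,0}$ (coming from a single multiplication by $\eta$ followed by one reduction), and proves an explicit closed formula for every $q_{d,i}$ by induction on $d$; the desired case $i=0$ is then read off using the Newton-type identity $p_d - s_1 p_{d-1} + \cdots + (-1)^d s_d = 0$. You instead isolate the top coefficient alone via the $k$-term linear recurrence $F(N) = -\sum_{i=1}^k s_i F(N-i)$, obtained by applying $\lambda$ directly to the unreduced relation $\eta^N = -\sum s_i \eta^{N-i}$, and solve it with the generating function $\Phi(z)$; the identity $\prod_j(1+y_jz)\cdot\sum_{m\ge 0}(-1)^m p_m z^m = 1$ then plays the role that the Newton identity plays in the paper. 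Your remark about why one cannot close an induction on the top coefficient alone using the single-step recursion is exactly what forces the paper to carry the auxiliary $q_{d,i}$'s, and your $k$-step recurrence is precisely the device that sidesteps this. The paper's method yields explicit formulas for all the lower coefficients as a byproduct (not used elsewhere), while yours is shorter and more conceptual.
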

\begin{proof}
We prove a slightly more general statement. Define symmetric polynomials $q_{d,i} \in \mathbb{C}[x_0, x_1, \ldots, x_k]$ by the formula
$$
\eta^{d+(k-1)}=q_{d,0}\eta^{k-1}+q_{d,1}\eta^{k-2}+\ldots+q_{d,k-1}.
$$
The definition implies that the polynomials $q_{d,i}$ satisfy the following initial condition and recursion:
\begin{align*}
q_{1,i}&=-s_{i+1},\\
q_{d,i}&=q_{d-1,i+1}-s_{i+1}q_{d-1,0},
\end{align*}
where $s_i:=s_i(x_0+x_1, \ldots, x_0+x_k)$ as in our discussion of $A^*(Y)$.

We will prove by induction on $d$ that this recursion is solved by the following formula:
$$
(-1)^dq_{d,i}=s_{i+1}p_{d-1}-s_{i+2}p_{d-2}+\ldots+(-1)^{d-1}s_{d+i},
$$
where $p_i:=p_i(x_0+x_1, \ldots, x_0+x_k)$. Note that we have the following basic combinatorial identity (the inclusion-exclusion principle):
$$
p_{d}-s_1p_{d-1}+s_2p_{d-1}-\ldots+(-1)^{d}s_d=0,
$$
so this will show in particular that $q_{d,0}=(-1)^dp_d$ as required.

Assuming that the claim is true for $d-1$, we have
\begin{align*}
(-1)^{d-1}q_{d-1,0}&=p_{d-1}\\
(-1)^{d-1}q_{d-1,i+1}&=s_{i+2}p_{d-2}-\ldots+(-1)^{d-2}s_{d+i}
\end{align*}

Thus, applying the recursion gives
\begin{align*}
(-1)^dq_{d,i}&=(-1)^d(q_{d-1,i+1}-s_{i+1}q_{d-1,0})\\
&=(-1)^{d-1}s_{i+1}q_{d-1,0}-(-1)^{d-1}q_{d-1,i+1}\\
&=s_{i+1}p_{d-1}-s_{i+2}p_{d-2}+\ldots+(-1)^{d-1}s_{d+i},
\end{align*}
as desired.

\end{proof}

\begin{lemma}\label{L:reduce2}
The coefficient of $x_0^{m}x_1^{e_1}\ldots x_k^{e_k}$ in the term-by-term expansion of $p_{m+\sum_{i=1}^{k}e_i}(x_0+x_1, \ldots, x_0+x_k)$ is ${m+\sum_ie_i+k-1 \choose m}$.
\end{lemma}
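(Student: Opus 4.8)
The plan is to reduce the statement to the elementary fact that the coefficient of $t^m$ in $(1-t)^{-(E+k)}$ equals $\binom{m+E+k-1}{m}$, where I abbreviate $E := \sum_{i=1}^{k} e_i$ and $d := m + E$ for the degree of the complete homogeneous symmetric polynomial in question.

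First I would expand $p_d$ in the standard ``monomial'' form of a complete homogeneous symmetric polynomial,
\[
p_d(y_1, \dots, y_k) = \sum_{\substack{a_1, \dots, a_k \ge 0 \\ a_1 + \dots + a_k = d}} y_1^{a_1} \cdots y_k^{a_k},
\]
each exponent vector occurring exactly once. Substituting $y_j = x_0 + x_j$ and expanding every factor by the binomial theorem, I observe that for $j \ge 1$ the variable $x_j$ is produced only by the $j$-th factor $(x_0 + x_j)^{a_j}$; hence extracting the coefficient of $x_0^{m} x_1^{e_1} \cdots x_k^{e_k}$ forces the $j$-th factor to contribute the term $\binom{a_j}{e_j}\, x_0^{a_j - e_j} x_j^{e_j}$, which requires $a_j \ge e_j$, and then the total power of $x_0$ is automatically $\sum_{j}(a_j - e_j) = d - E = m$, matching the prescribed exponent. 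Writing $a_j = e_j + c_j$ with $c_j \ge 0$, the degree condition $\sum a_j = d$ becomes $\sum c_j = m$, so the coefficient we want equals
\[
\sum_{\substack{c_1, \dots, c_k \ge 0 \\ c_1 + \dots + c_k = m}} \ \prod_{j=1}^{k} \binom{e_j + c_j}{e_j}.
\]

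To evaluate this sum I would invoke the generating function identity $\sum_{c \ge 0} \binom{e+c}{e} t^c = (1-t)^{-(e+1)}$: the displayed sum is then precisely the coefficient of $t^m$ in $\prod_{j=1}^{k}(1-t)^{-(e_j + 1)} = (1-t)^{-(E + k)}$, which is $\binom{m + E + k - 1}{m}$, exactly the claimed value. (If one prefers to avoid generating functions, the same conclusion follows by inducting on $k$, using the Vandermonde-type identity $\sum_{c_1 + c_2 = c} \binom{e_1 + c_1}{e_1}\binom{e_2 + c_2}{e_2} = \binom{e_1 + e_2 + c + 1}{c}$ to absorb two of the blocks at a time.)

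I do not expect any genuine obstacle here; the only point needing care is the bookkeeping in the first step, namely that each $x_j$ with $j \ge 1$ lives in a single binomial factor (so there is no overcounting of monomials) and that the hypothesis that the polynomial has degree exactly $m + E$ is precisely what forces the power of $x_0$ to equal $m$. Everything after that is the standard formula for the coefficients of a negative integer power of $1-t$.
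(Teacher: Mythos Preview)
Your proof is correct and follows essentially the same route as the paper: both reduce the coefficient to the sum
\[
\sum_{\substack{c_1,\dots,c_k\ge 0\\ c_1+\cdots+c_k=m}}\ \prod_{j=1}^{k}\binom{e_j+c_j}{e_j},
\]
and then evaluate it. The only difference is in this last step: the paper establishes the identity by a bijective counting argument with a row of colored marbles, whereas you use the generating function $\sum_{c\ge 0}\binom{e+c}{e}t^{c}=(1-t)^{-(e+1)}$ (or, equivalently, Vandermonde/induction on $k$). Both are standard; your version is a bit slicker.
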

\begin{proof}
For any choice of nonnegative integers $f_1, \ldots, f_k$ such that $f_1+\ldots+f_k=m$, the coefficient of $x_0^{m}x_1^{e_1}\ldots x_k^{e_k}$ in the expansion of 
$$\prod_{i=1}^{k}(x_0+x_i)^{e_i+f_i}$$
is given by $\prod_{i=1}^{k}{e_i+f_i \choose e_i}.$ It follows that the coefficient of $x_0^{m}x_1^{e_1}\ldots x_k^{e_k}$ in $p_{m+\sum_{i=1}^{k}e_i}(x_0+x_1, \ldots, x_0+x_k)$ is 
$$\sum_{f_1+\ldots+f_k=m} {e_1+f_1 \choose e_1}{e_2+f_2 \choose e_2} \cdots {e_k+f_k \choose e_k},$$
where the sum is taken over all partitions of $m$ into nonnegative integers $f_1, \ldots, f_k$. Thus, it suffices to establish the identity
$$\sum_{f_1+\ldots+f_k=m} {e_1+f_1 \choose e_1}{e_2+f_2 \choose e_2} \cdots {e_k+f_k \choose e_k}={m+\sum_{i=1}^{k}e_i+k-1 \choose \sum_{i=1}^{k}e_i+k-1}.$$

Consider a row of $\sum_{i=1}^k e_i+m+k-1$ marbles, with the first $e_1$ marbles having color 1, the next $e_2$ marbles having color 2, etc., and the last $m+k-1$ marbles having color $k+1$, which we might as well call black. The righthand side of our identity counts subsets of this row of marbles of size $\sum_{i=1}^ke_i+k-1$. We will show that the lefthand side counts the same thing. Given a partition $f_1+\ldots+f_k=m,$ we can divide the black marbles into $k$ sections of lengths $f_1, \ldots, f_k$ separated by $k-1$ walls, i.e. designate the $(f_1+1)^{st}, (f_1+f_2+2)^{nd}, \ldots, (f_{1}+\ldots+f_{k-1}+k-1)^{st}$ black marbles as walls. We can then pick a subset of size $\sum_{i=1}^{k}e_i+k-1$ by declaring the $k-1$ walls to be in the subset, and additionally taking exactly $e_i$ marbles which are either of color $i$ or black in section $i$ (for each $i=1,\ldots, k$). This gives $\prod_{i=1}^{k}{e_i+f_i \choose e_i}$ distinct subsets using the designated walls. As we range over all possible partitions of $m$ (i.e. all choices of walls), we choose each subset of size $\sum_{i=1}^ke_i+k-1$ exactly once.

\end{proof}

\begin{lemma}\label{L:reduce3}
For any integers $1 \leq m < d,$
we have
$$
{d-1 \choose m}-d{d-2 \choose m-1}+{d \choose 2}{d-3 \choose m-2}+\ldots \pm {d \choose m}{d-m-1 \choose 0}=(-1)^{m}
$$
\end{lemma}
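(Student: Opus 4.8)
The plan is to interpret the alternating sum as the coefficient extraction from a product of two simple generating functions. Write the sum as $\sum_{j=0}^{m} (-1)^{j} \binom{d}{j} \binom{d-1-j}{m-j}$. The factor $\binom{d}{j}$ is the coefficient of $t^{j}$ in $(1+t)^{d}$, and $\binom{d-1-j}{m-j}$ is the coefficient of $t^{m-j}$ in $(1+t)^{d-1-j}$, or equivalently (after shifting) the coefficient of $t^{m}$ in $t^{j}(1+t)^{d-1-j}$. So I would first rewrite the whole sum as the coefficient of $t^{m}$ in
$$
\sum_{j \geq 0} (-1)^{j} \binom{d}{j} t^{j} (1+t)^{d-1-j} = (1+t)^{d-1} \sum_{j \geq 0} \binom{d}{j}\left( \frac{-t}{1+t}\right)^{j} = (1+t)^{d-1}\left(1 - \frac{t}{1+t}\right)^{d} = (1+t)^{d-1} \cdot \frac{1}{(1+t)^{d}} = \frac{1}{1+t}.
$$
Then the coefficient of $t^{m}$ in $(1+t)^{-1} = \sum_{r \geq 0}(-1)^{r} t^{r}$ is exactly $(-1)^{m}$, which is the claim.

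The only point requiring care is the bookkeeping of signs and the upper limit of summation: the binomial coefficient $\binom{d-1-j}{m-j}$ vanishes for $j > m$ and also (since $m < d$) the terms with $j = m$ give $\binom{d-1-m}{0} = 1$, so the finite sum in the statement agrees with the formal sum over all $j \geq 0$, and the geometric-series manipulation above is legitimate at the level of formal power series in $t$. I would also double check that the signs in the statement — alternating, starting with $+\binom{d-1}{m}$ — match $(-1)^{j}$ as written; they do, since the $j$-th term is $(-1)^{j}\binom{d}{j}\binom{d-1-j}{m-j}$.

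I do not anticipate a genuine obstacle here: once the sum is recognized as a coefficient in $(1+t)^{d-1}\bigl(1 - t/(1+t)\bigr)^{d}$, the collapse to $1/(1+t)$ is immediate from the binomial theorem. An alternative, if one prefers to avoid generating functions, is a short induction on $m$ (or on $d$) using Pascal's rule $\binom{d-1-j}{m-j} = \binom{d-2-j}{m-j} + \binom{d-2-j}{m-1-j}$ together with $\binom{d}{j} = \binom{d-1}{j} + \binom{d-1}{j-1}$, reducing the identity for $(d,m)$ to the cases $(d-1,m)$ and $(d-1,m-1)$; but the generating-function argument is cleaner and I would present that one.
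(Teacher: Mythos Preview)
Your proof is correct, and it takes a genuinely different route from the paper's.

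The paper introduces the two-parameter family
\[
q(a,b,m)=\sum_{j=0}^{m}(-1)^{j}\binom{a}{j}\binom{b-j}{m-j},
\]
verifies the two identities $q(a,a,m)=0$ (via $\binom{a}{j}\binom{a-j}{m-j}=\binom{m}{j}\binom{a}{m}$) and $q(a,a-1,m)=q(a-1,a-1,m)-q(a-1,a-2,m-1)$ (via Pascal's rule), and then concludes $q(d,d-1,m)=-q(d-1,d-2,m-1)=(-1)^m$ by induction on $m$. Your argument instead recognizes the sum directly as the coefficient of $t^{m}$ in $(1+t)^{d-1}\bigl(1-\tfrac{t}{1+t}\bigr)^{d}=(1+t)^{-1}$ and reads off $(-1)^m$. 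The generating-function approach is shorter and avoids the auxiliary recursion entirely; the paper's approach is more elementary in that it stays within finite binomial manipulations, and the intermediate identity $q(a,a,m)=0$ may be of independent interest. Your bookkeeping is also sound: since $m<d$, every $j\le m$ has $d-1-j\ge 0$, so no negative-exponent issues arise, and terms with $j>m$ contribute nothing to the $t^m$ coefficient.
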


\begin{proof}
Define $$q(a,b,m)= {a \choose 0} {b \choose m} -{ a \choose 1} {b-1 \choose m-1} + {a \choose 2} {b-2 \choose m-2} - \ldots + (-1)^{m} {a \choose m} {b-m \choose 0},$$ for any nonnegative integers $a, b, m$ satisfying $a, b \geq m \geq 1$. We wish to show that $q(d,d-1,m)=(-1)^{m}$.

These sums are easily seen to satisfy the following two identities.
\begin{enumerate}
\item $q(a,a,m)=0$.
\item $q(a,a-1, m)=q(a-1,a-1,m)-q(a-1,a-2,m-1)$.
\end{enumerate}
The second identity is an easy consequence of Pascal's formula. For the first identity, simply observe that
\begin{align*}
q(a,a,m)&= {a \choose 0} {a \choose m} -{ a \choose 1} {a-1 \choose m-1} + {a \choose 2} {a-2 \choose m-2} - \ldots + (-1)^{m} {a \choose m} {a-m \choose 0}\\
&={m \choose 0} {a \choose m} -{ m \choose 1} {a \choose m} + {m \choose 2} {a \choose m} - \ldots + (-1)^{m} {m \choose m} {a \choose m}\\
&=0.
\end{align*}
From these two identities, the fact that $q(d,d-1,m)=(-1)^m$ follows immediately by induction.

\end{proof}

\subsection{Proof of $m$-stable Initial Condition}
In this section, we prove Theorem 3, which states that every Witten-Kontsevich number on $\M_{1,m+1}(m)$ is equal to $m!/24$. In fact, it suffices to prove that
$$
\langle \psi_1^{m+1} \rangle^{m}=\frac{m!}{24}.
$$
Indeed, since the $\Q$-Picard group of $\M_{1,m+1}(m)$ has rank one, all the $\psi_i$-classes are all equal (Proposition 3.2 in \cite{smyth_elliptic2}), hence all Witten-Kontsevich numbers on $\M_{1,m+1}(m)$ have the same value. We will use Theorems $1^*$ and $2^*$ to evaluate $\langle \psi_1^{m+1} \rangle^{m}$ inductively.

First, we apply Theorem $1^*$(a) to $\M_{1,m+1}(m-1) \dashrightarrow \M_{1,m}(m-1)$ to get
\begin{align*}
\langle \psi_1^{m+1} \rangle^{m-1}&=\langle \psi_1^m \rangle^{m-1}+ \langle \psi_1^{m} \rangle^{m-1} \sum_{S \in {m \brack m-1}}(-1)^{\star(S)}\prod_{j=1}^{k(S)} {|S_j|-2\choose \sum_{i \in S_j}d_i}.
\end{align*}
Note that if $S=\{S_1, \ldots, S_{m-1}\}$ is an $(m-1)$-partition of $[m]$, then we must have $k(S)=1$ and $|S_1|=2$. Furthermore, if $1 \in S_1,$ then we have

$${|S_1|-2 \choose \sum_{i \in S_1}d_i}=0,$$
since $d_1> |S_1|-2=0$. Thus, the only partitions that give rise to nonzero error terms are those which additionally satisfy $1 \notin S_1$. There are ${m-1\choose 2}$ such partitions, and for each of them, we have
\begin{align*}
\prod_{j=1}^{k(S)} {|S_j|-2\choose \sum_{i \in S_j}d_i}&={|S_1|-2 \choose \sum_{i \in S_1}d_i}= {0 \choose 0}=1.\\
\star(S):&=n-m-k(S)-\sum_{j=1}^{k(S)}d_j-1\\
&=m-(m-1)-1-0-1\\
&=-1.
\end{align*}
Thus, we get the formula
\[
\langle \psi_1^{m+1} \rangle^{m-1}=\left[1- {m-1 \choose 2}\right] \langle \psi_1^m \rangle^{m-1}.\\ 
\eqno{(\dagger)}
\]\\

Next, we apply Theorem $2^*$ to $\M_{1,m+1}(m-1) \dashrightarrow \M_{1,m+1}(m)$ to obtain
\begin{align*}
\langle \psi_1^{m+1} \rangle^{m}&=\langle \psi_1^{m+1} \rangle^{m-1}+ \langle \psi_1^{m} \rangle^{m-1} \sum_{S \in {m+1 \brack m}}(-1)^{\star(S)}\prod_{j=1}^{k(S)} {|S_j|-2\choose \sum_{i \in S_j}d_i}.
\\ 
\end{align*}
Note that if $S=\{S_1, \ldots, S_{m}\}$ is an $m$-partition of $[m+1]$, then we must have $k(S)=1$ and $|S_1|=2$. Furthermore, if $1 \in S_1,$ then we have
$${|S_1|-2 \choose \sum_{i \in S_1} d_i}=0,$$
since $d_1> |S_1|-2=0$. Thus, the only partitions that give rise to nonzero error terms are those which additionally satisfy $1 \notin S_1$. There are ${m \choose 2}$ such partitions, and for each of them, we have
\begin{align*}
\prod_{j=1}^{k(S)} {|S_j|-2\choose \sum_{i \in S_j}d_i}&={|S_1|-2 \choose \sum_{i \in S_1}d_i}= {0 \choose 0}=1.\\
\star(S):&=n-m-k(S)-\sum_{j=1}^{k(S)}d_j-1\\
&=(m+1)-(m-1)-1-0-1\\
&=0.\\
\end{align*}

Thus, we get the formula 
\[
\langle \psi_1^{m+1} \rangle^{m}=\langle \psi_1^{m+1} \rangle^{m-1}+{m \choose 2}\langle \psi_1^m \rangle^{m-1}.\\ 
\eqno{(\dagger\dagger)}
\]

Combining $(\dagger)$ and $(\dagger\dagger)$, we obtain
\begin{align*}
\langle \psi_1^{m+1} \rangle^{m}&=\left[{m \choose 2}- {m-1 \choose 2}+1\right] \langle \psi_1^m \rangle^{m-1}=m\,  \langle \psi_1^m \rangle^{m-1}.\\
\end{align*}

The formula $\langle \psi_1^{m+1} \rangle^{m}=m!/24$ follows immediately by induction on $m$ (using the well-known base case $\langle \psi_1 \rangle^0=\deg_{\M_{1,1}}\psi_1=1/24$).

\section{Sample Calculations}
In this section, we explain how to compute the Witten-Kontsevich numbers appearing in Table 1.
In the table, we use Witten's $\tau$-notation \cite{Witten}, setting
$$
\langle \prod_{j \in \N} \tau_j^{d_j} \rangle^m:=\langle \psi_1^{k_1} \psi_2^{k_2} \ldots \psi_n^{k_n} \rangle^m
$$
where $\{\tau_j: j \in \N\}$ are viewed as commuting formal variables, and
$$d_j:=\# \{i \,|\, k_i=j \}.$$
 This notation avoids redundancy in the labeling of Witten-Kontsevich numbers, e.g. $\langle \tau_0^2\tau_3\rangle$ gives a single label for the three obviously equal numbers
$$
\langle\psi_1^0 \psi_2^0 \psi_3^3 \rangle=\langle\psi_1^0 \psi_2^3 \psi_3^0 \rangle=\langle\psi_1^3 \psi_2^0 \psi_3^0 \rangle.
$$
In Table 1, we have suppressed the $m$-superscript, since it is implicit in the position of the entries. Also, in order to keep the entries of the table integral (and therefore easier to read), we have opted to renormalize the Witten-Kontsevich numbers by multiplying by a factor of 24. (This is tantamount to setting $\langle \tau_1 \rangle=1$, instead of its true value of 1/24.) 
 
The entire table can be built up from the initial entry $\langle \tau_1 \rangle=1$ by repeatedly applying Theorems 1 and 2.  As an illustration, we compute all Witten-Kontsevich numbers on $\M_{1,4}(2)$ (given that all Witten-Kontsevich numbers on $\M_{1,3}(2)$ and $\M_{1,4}(1)$ are known). First, however, we give a few informal tips for using Theorems 1 and 2 in concrete calculations.

In specific calculations, many of the error terms in Theorems 1 and 2 are zero, and it is convenient to have a quick method for identifying those that are nonzero. To this end, it is useful to keep in mind the underlying geometry, and also to make use of the \emph{deficiencies}, introduced in the proof of Proposition \ref{P:intersection}.  Consider, for example, the reduction recursion:

\begin{align*}
\langle \prod_{i=1}^{n} \psi_i^{d_i} \rangle^{m+1} &=\langle\prod_{i=1}^{n}\psi_i^{d_i} \rangle^{m} +\frac{m! }{24} \sum_{S \in {n \brack m+1}}(-1)^{\star(S)} \prod_{j=1}^{k(S)}{|S_j|-1 \choose \{d_i\}_{i \in S_j}}.
\end{align*}

We know that the error terms correspond to the irreducible components of the exceptional locus of the map
$$
\SM_{1,n}(m) \dashrightarrow \SM_{1,n}(m+1),
$$
i.e. each partition $S:=\{S_1, \ldots, S_m\}$ corresponds to a boundary stratum 
$$\Delta_S \simeq \SM_{1,m+1}(m) \times \SM_{0,|S_1|+1} \times \ldots \times \SM_{1,|S_k|+1}$$ parametrizing curves whose topological type is pictured in Figure 1(b). For each $i=1, \ldots, k$, we have defined the deficiencies
$$
e_i:=|S_i|-2-\sum_{j \in S_i}d_j
$$
in Proposition \ref{P:intersection}. Note that if one restricts $\prod_{j \in S_i}\psi_j^{d_j}$ to $\Delta_S$, then the resulting class is pulled back from $A_{e_i}(\M_{0,|S_i|+1}),$ i.e. $e_i$ measures how many more codimension-one classes are necessary to obtain a top-dimensional class on the $\M_{0,|S_i|+1}$-factor of $\Delta_S$. From this point of view, it is perhaps intuitive that if $e_i<0$ for any $i \in \{1, \ldots, k\}$, then the corresponding error term should vanish. This is true since
$${|S_i|-2 \choose \{d_j\}_{j \in S_i}}=0$$ if and only if $e_i<0$.

On the other hand, if $e_i>0$ for all $i \in \{1, \ldots, k\}$, then the value of the error term is just the degree of the top-dimensional class on $\Delta_S$ obtained by augmenting each class $\prod_{j \in S_i} \psi_j^{d_j}$ by the appropriate power of the $\psi$-class of the $(|S_i|+1)^{st}$ section, and multiplying by any top-dimensional product of $\psi$-classes on $\M_{1,m+1}(m)$, i.e. the error term (up to parity) is given by
$$
\langle \psi_1^{m+1} \rangle^m \langle x_1^{e_1} \prod_{j \in S_1}\psi_j^{d_j} \rangle_{0, |S_1|+1} \ldots \langle x_k^{e_k} \prod_{j \in S_k}\psi_j^{d_j} \rangle_{0,|S_k|+1}=\frac{m! }{24} \prod_{j=1}^{k}{|S_j|-1 \choose \{d_i\}_{i \in S_j}}.
$$
where $x_i$ is the chern class of the cotangent bundle over the $(|S_i|+1)^{st}$ section.\footnote{One way of thinking about this, which emerges from examination of the proof of Proposition \ref{P:intersection}, is that as long as the excess dimension of the class $\prod_{i=1}^n \psi_i^{d_i}$ is concentrated on the $\M_{1,m+1}(m)$ factor, then the fundamental relation in $A^*(E_S)$ allows this dimensional excess to leak over and augment the dimensional deficiencies of the classes $\prod_{j \in S_i}\psi_j^{d_j}$ on the $\M_{0,|S_i|+1}$ factors.} 
As an added bonus, if one has computed the deficiencies associated to a given partition, the parity of the corresponding error term is easily recognized. It is just $(-1)^{\sum_{i=1}^{k} e_i}$, since we have

\begin{align*}
\sum_{i=1}^{k} e_i&=\sum_{i=1}^{k}|S_i|-2k-\sum_{i \in S_1 \cup \ldots \cup S_k} d_i,\\
&=(n-(m+1-k))-2k-\sum_{i \in S_1 \cup \ldots \cup S_k} d_i,\\
&=n-m-k-\sum_{i \in S_1 \cup \ldots \cup S_k} d_i-1\\
&=\star(S).
\end{align*}

In sum, a practical method for evaluating the error terms in the reduction recursion is obtained as follows:
\begin{enumerate}
\item Draw a figure illustrating the combinatorial type of a generic curve in $\Delta_{S}$ for each partition $S$.
\item Determine which labelings of marked points on the given figures produce non-negative deficiencies on each rational component.
\item Determine the absolute value of the error term by interpreting it as an intersection number on $\Delta_S$, and determine its parity by summing the associated deficiencies.
\end{enumerate}
Hopefully, this heuristic will become clear in the examples that follows. Exactly the same procedure can be used to evaluate the error terms in the string/dilaton recursions: Each partition corresponds to a stratum of curves (whose topological type is pictured in Figure 1(a)), and the corresponding error term is nonzero if and only if the associated deficiencies are all nonnegative. The only catch is that, in these cases, we have $\star(S)=\sum_{i=1}^{k}e_i+1.$ Thus, in contrast to the reduction recursion, the parity of the error terms in the string/dilaton recursions is $\sum_{i=1}^{k}e_i+1$.

\begin{table}[!h]
\caption{Witten-Kontsevich numbers for $m<n \leq 6$.}
\centering
\begin{tabular}{c|r r r r r r}
 & $\M_{1,1}$ & $\M_{1,2}(m)$ & $\M_{1,3}(m)$&$\M_{1,4}(m)$&$\M_{1,5}(m)$&$\M_{1,6}(m)$ \\ [0.8ex] 
\hline
$m=0/1$
&$\langle \tau_1 \rangle=1$&$\langle \tau_0\tau_2 \rangle=1$&$\langle \tau_0^2\tau_3 \rangle=1$&$\langle \tau_0^3\tau_4 \rangle=1$&$\langle \tau_0^4\tau_5 \rangle=1$&$\langle \tau_0^5\tau_6 \rangle=1$ \\
&&$\langle \tau_1^2 \rangle=1$&$\langle \tau_0\tau_1\tau_2 \rangle=2$&$\langle \tau_0^2\tau_1\tau_3 \rangle=3$&$\langle \tau_0^3\tau_1\tau_4 \rangle=4$&$\langle \tau_0^4\tau_1\tau_5 \rangle=5$\\
&&&$\langle \tau_1^3 \rangle=2$&$\langle \tau_0^2\tau_2^2 \rangle=4$&$\langle \tau_0^3\tau_2\tau_3 \rangle=7$&$\langle \tau_0^4\tau_2\tau_4 \rangle=11$\\
&&&&$\langle \tau_0\tau_1^2\tau_2 \rangle=6$&$\langle \tau_0^2\tau_1^2\tau_3 \rangle=12$&$\langle \tau_0^4\tau_3^2 \rangle=14$\\
&&&&$\langle \tau_1^4 \rangle=6$&$\langle \tau_0^2\tau_1\tau_2^2 \rangle=16$&$\langle \tau_0^3\tau_1^2\tau_4 \rangle=20$\\
&&&&&$\langle \tau_0\tau_1^3\tau_2 \rangle=24$&$\langle \tau_0^3\tau_1\tau_2\tau_3\rangle=35$\\
&&&&&$\langle \tau_1^5 \rangle=24$&$\langle \tau_0^3\tau_2^3 \rangle=48$\\
&&&&&&$\langle \tau_0^2\tau_1^3\tau_3 \rangle=60$\\
&&&&&&$\langle \tau_0^2\tau_1^2\tau_2^2 \rangle=80$\\
&&&&&&$\langle \tau_0\tau_1^4\tau_2 \rangle=120$\\
&&&&&&$\langle \tau_1^6 \rangle=120$\\
&&&&&&\\
$m=2$
&&&$\langle \tau_0^2\tau_3 \rangle=2$&$\langle \tau_0^3\tau_4 \rangle=0$&$\langle \tau_0^4\tau_5 \rangle=2$ &$\langle \tau_0^5\tau_6 \rangle=0$ \\
&&&$\langle \tau_0\tau_1\tau_2 \rangle=2$&$\langle \tau_0^2\tau_1\tau_3 \rangle=4$&$\langle \tau_0^3\tau_1\tau_4 \rangle=2$&$\langle \tau_0^4\tau_1\tau_5 \rangle=8$\\
&&&$\langle \tau_1^3 \rangle=2$&$\langle \tau_0^2\tau_2^2 \rangle=4$&$\langle \tau_0^3\tau_2\tau_3 \rangle=8$&$\langle \tau_0^4\tau_2\tau_4 \rangle=8$\\
&&&&$\langle \tau_0\tau_1^2\tau_2 \rangle=6$&$\langle \tau_0^2\tau_1^2\tau_3 \rangle=14$&$\langle \tau_0^4\tau_3^2 \rangle=16$\\
&&&&$\langle \tau_1^4 \rangle=6$&$\langle \tau_0^2\tau_1\tau_2^2 \rangle=16$&$\langle \tau_0^3\tau_1^2\tau_4 \rangle=14$\\
&&&&&$\langle \tau_0\tau_1^3\tau_2 \rangle=24$&$\langle \tau_0^3\tau_1\tau_2\tau_3\rangle=38$\\
&&&&&$\langle \tau_1^5 \rangle=24$&$\langle \tau_0^3\tau_2^3 \rangle=48$\\
&&&&&&$\langle \tau_0^2\tau_1^3\tau_3 \rangle=66$\\
&&&&&&$\langle \tau_0^2\tau_1^2\tau_2^2 \rangle=80$\\
&&&&&&$\langle \tau_0\tau_1^4\tau_2 \rangle=120$\\
&&&&&&$\langle \tau_1^6 \rangle=120$\\
&&&&&\\
$m=3$
&&&&$\langle \tau_0^3\tau_4 \rangle=6$&$\langle \tau_0^4\tau_5 \rangle=-12$ &$\langle \tau_0^5\tau_6 \rangle=30$ \\
&&&&$\langle \tau_0^2\tau_1\tau_3 \rangle=6$&$\langle \tau_0^3\tau_1\tau_4 \rangle=6$&$\langle \tau_0^4\tau_1\tau_5 \rangle=-18$\\
&&&&$\langle \tau_0^2\tau_2^2 \rangle=6$&$\langle \tau_0^3\tau_2\tau_3 \rangle=6$&$\langle \tau_0^4\tau_2\tau_4 \rangle=18$\\
&&&&$\langle \tau_0\tau_1^2\tau_2 \rangle=6$&$\langle \tau_0^2\tau_1^2\tau_3 \rangle=18$&$\langle \tau_0^4\tau_3^2 \rangle=18$\\
&&&&$\langle \tau_1^4 \rangle=6$&$\langle \tau_0^2\tau_1\tau_2^2 \rangle=18$&$\langle \tau_0^3\tau_1^2\tau_4 \rangle=18$\\
&&&&&$\langle \tau_0\tau_1^3\tau_2 \rangle=24$&$\langle \tau_0^3\tau_1\tau_2\tau_3\rangle=36$\\
&&&&&$\langle \tau_1^5 \rangle=24$&$\langle \tau_0^3\tau_2^3 \rangle=54$\\
&&&&&&$\langle \tau_0^2\tau_1^3\tau_3 \rangle=78$\\
&&&&&&$\langle \tau_0^2\tau_1^2\tau_2^2 \rangle=84$\\
&&&&&&$\langle \tau_0\tau_1^4\tau_2 \rangle=120$\\
&&&&&&$\langle \tau_1^6 \rangle=120$\\
&&&&&\\
$m=4$
&&&&&$\langle \tau_0^4\tau_5 \rangle=24$ &$\langle \tau_0^5\tau_6 \rangle=-120$ \\
&&&&&$\langle \tau_0^3\tau_1\tau_4 \rangle=24$&$\langle \tau_0^4\tau_1\tau_5 \rangle=-24$\\
&&&&&$\langle \tau_0^3\tau_2\tau_3 \rangle=24$&$\langle \tau_0^4\tau_2\tau_4 \rangle=-24$\\
&&&&&$\langle \tau_0^2\tau_1^2\tau_3 \rangle=24$&$\langle \tau_0^4\tau_3^2 \rangle=-24$\\
&&&&&$\langle \tau_0^2\tau_1\tau_2^2 \rangle=24$&$\langle \tau_0^3\tau_1^2\tau_4 \rangle=48$\\
&&&&&$\langle \tau_0\tau_1^3\tau_2 \rangle=24$&$\langle \tau_0^3\tau_1\tau_2\tau_3\rangle=48$\\
&&&&&$\langle \tau_1^5 \rangle=24$&$\langle \tau_0^3\tau_2^3 \rangle=48$\\
&&&&&&$\langle \tau_0^2\tau_1^3\tau_3 \rangle=96$\\
&&&&&&$\langle \tau_0^2\tau_1^2\tau_2^2 \rangle=96$\\
&&&&&&$\langle \tau_0\tau_1^4\tau_2 \rangle=120$\\
&&&&&&$\langle \tau_1^6 \rangle=120$\\
&&&&&\\
$m=5$
&&&&&&$\langle \tau_1^6 \rangle=120$ \\
[1ex]
\end{tabular}
\label{table:nonlin}
\end{table}

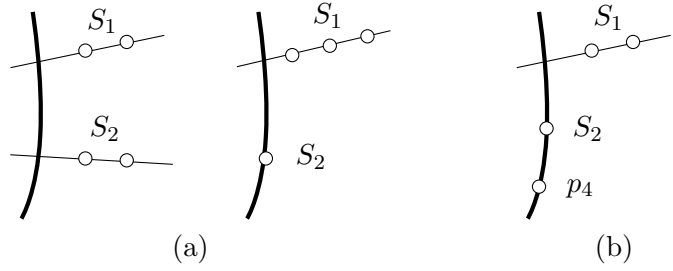
\begin{figure}

\centering
\begin{tikzpicture}[scale=.5]
\draw[ultra thick] plot [smooth, tension=1] coordinates {(1.8,4.3)  (2,1) (1.5,-1.2)} ;
\draw[rotate=5] plot [smooth, tension=1] coordinates {(1.45,2.7)  (5.6,3.2)};
\node at (3.65,4.05) {$S_{1}$};
\draw[fill=white] (4.3,3.5) circle(5pt);
\draw[fill=white] (3.2,3.27) circle(5pt);
\draw[rotate=-10] plot [smooth, tension=1] coordinates {(1.1,.7)  (5.4,1.2)};
\node at (3.72,1.2) {$S_{2}$};
\draw[fill=white] (4.3,.35) circle(5pt);
\draw[fill=white] (3.2,.4) circle(5pt);

\draw[ultra thick] plot [smooth, tension=1] coordinates {(7.8,4.3)  (8,1) (7.5,-1.2)} ;
\draw[rotate=5] plot [smooth, tension=1] coordinates {(7.45,2.2)  (11.6,2.8)};
\node at (9.65,4.2) {$S_{1}$};
\draw[fill=white] (10.7,3.65) circle(5pt);
\draw[fill=white] (9.7,3.4) circle(5pt);
\draw[fill=white] (8.7,3.15) circle(5pt);
\node at (9.2,.45) {$S_{2}$};

\draw[fill=white] (8,.4) circle(5pt);

\node at (6,-2){(a)};
\end{tikzpicture}
\qquad\qquad
\begin{tikzpicture}[scale=.5]
\draw[ultra thick] plot [smooth, tension=1] coordinates {(1.8,4.3)  (2,1) (1.5,-1.2)} ;
\draw[rotate=5] plot [smooth, tension=1] coordinates {(1.45,2.7)  (5.6,3.2)};
\node at (3.65,4.15) {$S_{1}$};
\draw[fill=white] (4.3,3.5) circle(5pt);
\draw[fill=white] (3.2,3.27) circle(5pt);
\node at (3.1,1.2) {$S_{2}$};
\node at (2.9,-.4) {$p_4$};
\draw[fill=white] (2,1.2) circle(5pt);
\draw[fill=white] (1.8,-.35) circle(5pt);

\node at (3.8,-2){(b)};
\end{tikzpicture}
\caption{Combinatorial types of irreducible components of the exceptional locus of  (a) $\M_{1,4}(1) \dashrightarrow \M_{1,4}(2)$, and (b) $\M_{1,4}(2) \dashrightarrow \M_{1,3}(2)$ } \label{F2}
\end{figure}

\subsection{Reduction Recursion}\label{S:Rcompute}
Let us use the reduction recursion to compute Witten-Kontsevich numbers on $\M_{1,4}(2)$ in terms of those on $\M_{1,4}(1)$.
In light of the above discussion, the relevant formula is
$$
\langle \prod_{i=1}^4 \psi_i^{d_i} \rangle^2=\langle  \prod_{i=1}^4 \psi_i^{d_i}\rangle^1+\sum_{S \in {4 \brack 2}}(-1)^{\sum_{i=1}^{k} e_i} \,\prod_{j=1}^{k}\langle \psi_1^2 \rangle^1{|S_j|-1 \choose \{d_i\}_{i \in S_j}}.
$$
The error terms correspond to exceptional components of $\M_{1,4}(1) \dashrightarrow \M_{1,4}(2)$, which come in two types, namely those of types \{2,2\} and type \{1,3\} (see Figure 2(a)).

\subsubsection{$\langle \tau_0^3\tau_4 \rangle^2$}\label{S:calc1}
We evaluate $\langle \tau_0^3\tau_4 \rangle^2$ by computing $\langle \psi_1^4 \rangle^2.$ First, note that the error terms corresponding to partitions of type $\{2,2\}$ will all vanish since $p_1$ will necessarily be located on either $R_1$ or $R_2$, thereby forcing either $e_1$ or $e_2$ to be negative. Similarly, the only stratum of type \{1,3\} that can contribute a nonzero error term is that which has $p_1$ supported on the elliptic component, i.e. the partition $\{\{1\}, \{2,3,4\}\}$. In this case, $e_1=|S_1|-2-\sum_{j \in S_1}d_j=3-2-0=1$, so the corresponding error term is negative, and we get

\begin{align*}
\langle \psi_1^4 \rangle^2&=\langle \psi_1^4 \rangle^1-\langle \psi_1^2 \rangle^1 \langle x_1^4 \rangle_{0,4}\\
&=\langle \psi_1^4 \rangle^1-\langle \psi_1^2 \rangle^1\\
&=1-1=0.\\
\end{align*}

\subsubsection{ $\langle \tau_0^2\tau_1 \tau_3 \rangle^2$}
We evaluate $\langle \tau_0^2\tau_1 \tau_3 \rangle^2$ by computing $\langle \psi_1^3\psi_2 \rangle^2.$ As in \ref{S:calc1}, $p_1$ can't lie on any rational component of any boundary stratum without producing a negative deficiency, so the only stratum that can contribute a nonzero error term the same as in our computation of $\langle \psi_1^4 \rangle^2$, namely $\{\{1\}, \{2,3,4\}\}$. In this case, we have
$e_1=|S_1|-2-\sum_{j \in S_1}d_j=3-2-1=0$, so we get 

\begin{align*}
\langle \psi_1^3\psi_2 \rangle^2&=\langle \psi_1^3\psi_2 \rangle^1+\langle \psi_1^2 \rangle^1 \langle x_1^4 \rangle_{0,4}\\
&=\langle  \psi_1^3\psi_2 \rangle^1+\langle \psi_1^2 \rangle^1\\
&=3+1=4.\\
\end{align*}

\subsubsection{$\langle \tau_0^2\tau_2^2 \rangle^2$, $\langle \tau_0\tau_1^2\tau_2 \rangle^2$, $\langle \tau_1^4 \rangle^2$.}
These 2-stable Witten Kontsevich numbers are all equal to the corresponding 1-stable (or ordinary) Witten-Kontsevich numbers. This is easily seen in terms of the heuristics explained above. In each case, a stratum of type $\{1,3\}$ will necessarily have an associated negative deficiency because the sum of the exponents $d_i$ on any three of the four sections is at least two. Similarly, every stratum of type $\{2,2\}$ will have an associated negative deficiency because any way of dividing the four sections into two pairs produces at least one pair whose exponents $d_i$ sum to at least 2.

\subsection{String/Dilaton Recursion}
Now we use the string and dilaton recursions to compute the Witten-Kontsevich numbers on $\M_{1,4}(2)$ in terms of those on $\M_{1,3}(2)$. We will see that our results are consistent with the calculations in \ref{S:Rcompute}. In light of the preceding discussion, the relevant formulae are
\begin{align*}
\langle \prod_{i=1}^{3} \psi_i^{d_i} \cdot \psi_{4}^{0} \rangle^2&=\sum_{j=1}^{3}\langle \prod_{i=1}^{3}\psi_i^{d_i-\delta_{ij}} \rangle^2 + \langle \psi_1^{3} \rangle^2 \sum_{S \in {3 \brack 2}}(-1)^{\sum_{i=1}^{k}e_i+1}\,\prod_{j=1}^{k}{|S_j|-1 \choose \{d_i\}_{i \in S_j}}.\\
\langle \prod_{i=1}^{3} \psi_i^{d_i} \cdot \psi_{4} \rangle^2 &=3 \langle\prod_{i=1}^{3}\psi_i^{d_i} \rangle^2 +\langle \psi_1^{3} \rangle^2\sum_{S \in {3 \brack 2}}(-1)^{\sum_{i=1}^{k}e_i+1}\, \prod_{j=1}^{k}{|S_j|-1 \choose \{d_i\}_{i \in S_j}}.
\end{align*}
The error terms correspond to exceptional components of the map $\M_{1,4}(2) \dashrightarrow \M_{1,3}(2)$, which are all of the same combinatorial type, namely $\{1,2\}$ (see Figure 2(b)).

\subsubsection{$\langle \tau_0^3\tau_4 \rangle^2$}
To compute $\langle \tau_0^3\tau_4 \rangle^2=\langle \psi_1^4 \rangle^2$, we use the string equation. As in \ref{S:calc1}, if $p_1$ is located on the rational component, then the associated deficiency is negative, so the only nonzero error term corresponds to the partition $\{\{1\}, \{2,3\}\}$. For this partition, the associated deficiency is $e_1=2-2=0$. Thus, the associated error term is negative (recall $\star(S)=\sum_{i=1}^{k}e_i+1$ for the string/dilaton recursions), and we obtain

\begin{align*}
\langle \psi_1^4 \rangle^2&=\langle \psi_1^3 \rangle^2-\langle \psi_1^3 \rangle^2 \langle x_1^3 \rangle_{0,3}\\
&=\langle \psi_1^3 \rangle^2-\langle \psi_1^3 \rangle^2\\
&=2-2=0.\\
\end{align*}

\subsubsection{$\langle \tau_0^2\tau_1 \tau_3 \rangle^2$}
We can evaluate this number either by the string equation (representing it as $\langle \psi_1^3\psi_2 \rangle^2$), or the dilaton equation (representing it as $\langle \psi_1^3\psi_4 \rangle^2$).
Using first the string equation, we see that neither $p_1$ nor $p_2$ can lie on the rational component of a boundary stratum without producing a negative deficiency. Thus, in this case, there are no nonzero error terms, and we obtain
\begin{align*}
\langle \psi_1^3\psi_2 \rangle^2&=\langle \psi_1^3 \rangle^2+\langle \psi_1^2 \psi_2 \rangle^2\\
&=2+2=4.\\
\end{align*}

If we use the dilaton equation, then we get exactly one nonzero error term corresponding to the partition $\{\{1\}, \{2,3\}\}$. The deficiency is $e_1=2-2=0$, so we obtain

\begin{align*}
\langle \psi_1^3\psi_4 \rangle^2&=3 \langle \psi_1^3 \rangle^2-\langle \psi_1^3 \rangle^2 \langle x_1^3 \rangle_{0,3}\\
&=2 \langle  \psi_1^3 \rangle^2\\
&=4.\\
\end{align*}

\subsubsection{$\langle \tau_0^2\tau_2^2 \rangle^2$, $\langle \tau_0\tau_1^2\tau_2 \rangle^2$, $\langle \tau_1^4 \rangle^2$.}
These 2-stable Witten Kontsevich numbers are all equal to the corresponding 1-stable (or ordinary) Witten-Kontsevich numbers.  The first two can be evaluated by the string equation, while the latter two can be evaluated using the dilaton equation. In every case, using the same analysis as above, we see that all the error terms vanish.

\bibliography{references}{}
\bibliographystyle{amsalpha}

\end{document}